\newtheorem{theorem}{Theorem}
\newtheorem{lemma}{Lemma}
\newtheorem{proposition}{Proposition}
\theoremstyle{definition}
\newtheorem{remark}{Remark}
\theoremstyle{plane}
\def \beq{ \begin{equation} }
\def \eeq{\end{equation}}
\def \S {\mathbb {S}}
\def \cO{\mathcal{O}}
\def \cR{\mathcal{R}}
\def \eps {\varepsilon}
\def \ii {\mbox i}
\def \fm {\phantom{-}}
\title{On the stability of tetrahedral relative equilibria in the positively curved 4-body problem}
\begin{document}
\maketitle
\markboth{Florin Diacu, Regina Mart\'{\i}nez, Ernesto P\'erez-Chavela, and Carles Sim\'o}{Stability in the positively curved $4$-body problem}
\author{\begin{center}{\bf Florin Diacu}\\
Pacific Institute for the Mathematical Sciences\\ and\\
Department of Mathematics and Statistics\\
University of Victoria\\
Victoria, Canada\\
\bigskip
{\bf Regina Mart\'{\i}nez}\\
Departament de Matem\`atiques\\
Universitat Aut\`onoma de Barcelona\\
Bellatera, Barcelona, Spain\\
\bigskip
{\bf Ernesto P\'erez-Chavela}\\
Departamento de Matem\'aticas\\
Universidad Aut\'onoma Metropolitana\\
Iztapalapa, Mexico, D.F., Mexico\\
 \bigskip
{\bf Carles Sim\'o}\\
Departament de Matem\`atica Aplicada i An\`alisi\\
Universitat de Barcelona\\
Barcelona, Spain
\end{center}}

\medskip

\begin{abstract} We consider the motion of point masses given by a natural extension of Newtonian gravitation to spaces of constant positive curvature. Our goal is to explore the spectral stability of tetrahedral orbits of the corresponding 4-body problem in the 2-dimensional case, a situation that can be reduced to studying the motion of the bodies on the unit sphere. We first perform some extensive and highly precise numerical experiments to find the likely regions of stability and instability, relative to the values of the masses and to the latitude of
the position of three equal masses. Then we support the numerical evidence 
with rigorous analytic proofs in the vicinity of some limit cases  in which 
certain masses are either very large or negligible, or the latitude is close to zero.
\end{abstract}

\hfill

\section{Introduction}

The goal of this paper is to study the spectral stability of tetrahedral orbits in the 2-dimensional positively curved 4-body problem, i.e.\ when four point particles of positive masses move on the unit sphere $\mathbb S^2$ according to a gravitational law that naturally extends the Newtonian potential to spaces of constant curvature. This is a particular case of the curved $n$-body problem, $n\ge 2$, which has been only recently derived in a suitable setting, both for constant positive curvature (i.e.\ 2- and 3-dimensional spheres) and for constant negative curvature (i.e.\ 2- and 3-dimensional hyperbolic spheres), \cite{diacupersan1}, \cite{diacupersan2}, \cite{diacu4}, \cite{diacu5}.

The case $n\!=\!2$ dates back to the 1830s, when J\'anos Bolyai and Nikolai Lobachevsky introduced it for the hyperbolic space $\mathbb H^3$, \cite{bol}, \cite{lob}. The equations of motion for constant nonzero curvature are given by the cotangent of the spherical distance, in the positive case, and the hyperbolic cotangent of the hyperbolic distance when the space curvature is negative. For zero curvature, the classical Newtonian equations of the $n$-body problem are recovered. The analytic expression of the potential is due to Ernest Schering, \cite{sch1}, \cite{sch2}, for negative curvature, and to Wilhelm Killing, \cite{kil1}, \cite{kil2}, \cite{kil3}, for positive curvature. The problem also became established due to the results of Heinrich Liebmann, \cite{lie1}, \cite{lie2}, \cite{lie3}. The attempts to extend the problem to spaces of variable curvature started with Tullio Levi-Civita, \cite{civ1}, \cite{civ2}, Albert Einstein, Leopold Infeld, Banesh Hoffman, \cite{ein}, and Vladimir Fock, \cite{foc}, and led to the equations of the post-Newtonian approximation, which are useful in many applications, including the global positioning system. But unlike in the case of constant curvature, these equations are too large and complicated to allow an analytic approach.

It is important to ask why the above extension of the Newtonian potential to spaces of constant curvature is natural, since there is no unique way of generalizing the classical equations of motion in order to recover them when the space in which the bodies move flattens out. The reason is that the cotangent potential is, so far, the only one known to satisfy the same basic properties as the Newtonian potential in its simplest possible setting, that of one body moving around a fixed centre, the so-called Kepler problem \cite{koz}. Two basic properties stick out in this case: the potential of the classical Kepler problem is a harmonic function in $\mathbb R^3$, i.e.\ it satisfies Laplace's equation, and it generates a central field in which all bounded orbits are closed, a result proved by Joseph Louis Bertrand in 1873, \cite{ber}.

On one hand, the cotangent potential approaches the classical Newtonian potential when the curvature tends to zero, whether through positive or negative values. On the other hand, the cotangent potential satisfies Bertrand's property for the curved Kepler problem and is a solution of the Laplace-Beltrami equation, \cite{diacu5}, \cite{koz}, the natural generalization of Laplace's equation to Riemannian and pseudo-Riemannian manifolds, which include the spaces of constant positive curvature $\kappa> 0$ we are interested in here.

In the Euclidean case, the Kepler problem and the 2-body problem are equivalent. The reason for this overlap is the existence of the linear momentum and centre of mass integrals. It can be shown with their help that the equations of motion are identical, whether the origin of the coordinate system is fixed at the centre of mass or fixed at one of the two bodies. For nonzero curvature, however, things change. The equations of motion of the curved $n$-body problem lack the linear momentum and centre of mass integrals, which prove to characterize only the Euclidean case, \cite{diacu5}, \cite{diacu6}, \cite{diacupersan1}. Consequently the curved Kepler problem and the curved 2-body problem are not equivalent anymore. It turns out that, as in the Euclidean case, the curved Kepler problem is Liouville integrable, but, unlike in the Euclidean case, the curved 2-body problem is not, \cite{shc2}, \cite{shc3}, \cite{shc4}. As expected, the curved $n$-body problem is not integrable for $n\ge 3$, a property also known to be true in the Euclidean case.

A detailed bibliography and a history of these developments appear in \cite{diacu5}. Notice also that the study we perform here in $\mathbb S^2$ is not restrictive since the qualitative behaviour of the orbits is independent of the value of the positive curvature, \cite{diacu5}, \cite{diacupersan1}.

The current paper is a natural continuation of some ideas developed in \cite{masimo}, which studied the stability of Lagrangian orbits (rotating equilateral triangles) of the curved 3-body problem on the unit sphere, $\mathbb S^2$, both when the mutual distances remain constant and when they vary in time.
The former orbits, called relative equilibria, are a particular case of the latter, and they are part of the backbone towards understanding the equations of motion in the dynamics of particle systems, \cite{diacu5}, \cite{diacuper}. Unlike in the classical Newtonian 3-body problem, where the motion of Lagrangian orbits takes place in the Euclidean plane, the Lagrangian orbits of $\mathbb S^2$ exist only when the three masses are equal, \cite{diacupersan1}, \cite{diacu5}. But equal-mass classical Lagrangian orbits are known to be unstable, so it was quite a surprise to discover that, in $\mathbb S^2$, the Lagrangian relative equilibria exhibit two zones of linear stability. This does not seem to be the case for constant negative curvature, i.e.\ in the hyperbolic plane $\mathbb H^2$, as some preliminary numerical experiments show. Consequently, the shape of the physical space has a strong influence over particle dynamics, therefore studies in this direction promise to lead to new connections between the classical and the curved $n$-body problem.

The result obtained in \cite{masimo} thus opened the door to investigations into the stability of other orbits characteristic to $\mathbb S^2$, and tetrahedral solutions came as a first natural choice, since the experience accumulated in the previous study could be used in this direction, as we will actually do here.

The paper is organized as follows. In Section 2, we introduce the tetrahedral solutions in $\mathbb S^2$, i.e.\ orbits of the $4$-body problem with one body of mass $m_1$ fixed at the north pole and the other three bodies of equal mass $m$ located at the vertices of a
rotating equilateral triangle orthogonal to the $z$-axis. If the triangle is above the equator, i.e.\ the $z$ coordinate of the three equal masses is positive, the tetrahedral relative equilibria exist for any given masses. If the triangle is below the equator, the relative equilibria exist just for some values of the masses. To approach the spectral stability of the relative equilibria, we compute in Section 3 the Jacobian matrix of the vector field at the relative equilibria, which become fixed points in the rotating frame. 

The study of the stability starts in Section 4, where we analyze three limit problems, first taking the mass at the north pole $m_1=0$, i.e.\ $\Gamma:=m_1/m=0$. In this case, the tetrahedral relative equilibria are spectrally stable for $z<0$ and unstable for $z>0$. In the second limit problem the mass $m_1$ is very large when compared to the other three masses. Taking $\eps:=1/\Gamma=m/m_1$, in the limit case $\eps\to 0$, the problem reduces to three copies of $2$-body problems, formed for the mass at the north pole and a body of zero mass. The changes in this degenerate situation for small $\eps>0$ are studied in Section 6, where we also consider the third limit problem, for which we take $z=0$ and let the parameters $\Gamma$ or $\eps$ and $z$ move away from zero. To reach this point, we previously perform in Section 5 a deep and highly precise numerical analysis to determine the regions of stability according to the values of $z$ and of the masses. Our main results occur in Section 6, where using the Newton polygon (including the degenerate cases) and the Implicit Function Theorem we study all bifurcations that appear when the limit problems are perturbed and draw rigorously proved conclusions about the spectral stability of tetrahedral relative equilibria. We end this paper with a full bifurcation diagram and an outline of future research perspectives.

\section{Tetrahedral orbits in $\S^2$}

Consider four bodies of masses $m_1,m_2,m_3,m_4>0$ moving on the unit sphere
$\mathbb S^2$, which has constant curvature 1. Then the natural extension of Newton's equations of motion from $\mathbb R^2$ to $\mathbb S^2$ is given by
\begin{equation}
\label{negativee}
\ddot{\bf q}_i=\sum_{j=1,j\ne i}^4\frac{m_j[{\bf q}_j-({\bf q}_i\cdot {\bf q}_j){\bf q}_i]}{[1-({\bf q}_i\cdot {\bf q}_j)^2]^{3/2}} - (\dot{\bf q}_i\cdot \dot{\bf q}_i){\bf q}_i,\,\
({\bf q}_i\cdot{\bf q}_i)=1, \ \ i=1,2,3,4,
\end{equation}
where the vector ${\bf q}_i=(x_i,y_i,z_i)$ gives the position of the body of mass $m_i, i=1,2,3,4$, and the dot, $\cdot$ , denotes the standard scalar product of $\mathbb R^3$, \cite{diacupersan2}, \cite{perezrey}. These equations are known to be Hamiltonian, \cite{diacu5}.

By a tetrahedral solution we mean an orbit in which one body, say $m_1$, is fixed at the north pole $(0,0,1)$, while the other bodies, $m_2=m_3=m_4=:m$, lie at the vertices of an equilateral triangle that rotates uniformly in a plane parallel with the equator $z=0$. In other words, we are interested in solutions of the form
\begin{align*}
x_1&=0,& y_1&=0,& z_1&=1,\\
x_2&=r\cos\omega t,& y_2&=r\sin\omega t,& z_2&=\pm(1-r^2)^{1/2},\\
x_3&=r\cos(\omega t+2\pi/3),& y_3&=r\sin(\omega t+2\pi/3),& z_3&=\pm(1-r^2)^{1/2},\\
x_4&=r\cos(\omega t+4\pi/3),& y_4&=r\sin(\omega t+4\pi/3),& z_4&=\pm(1-r^2)^{1/2},
\end{align*}
where $r$ and $\omega$ are constant, $r$ denotes the radius of the circle
in which the triangle rotates, and $\omega$ represents the angular velocity of the
rotation. A straightforward computation shows that
$$
\omega^2=\frac{24m}{r^3(12-9r^2)^{3/2}}\pm\frac{m_1}{r^3(1-r^2)^{1/2}}=:g(r),
$$
where we take the plus or the minus sign depending on whether $z:=z_2=z_3=z_4$ is positive or negative, respectively.

The purpose of this paper is to provide a complete study of the spectral stability of such orbits, which are obviously periodic. For this, we will use rotating coordinates, in which the above periodic relative equilibria become fixed points for the equations of motion. Recall that a fixed point is linearly stable if all orbits of the tangent flow are bounded for all time, and it is spectrally stable if no eigenvalue is positive or has positive real part. Linear stability implies spectral stability, but not the other way around. 
Nevertheless, spectral stability fails to imply linear stability only in the case of
matrices with multiple eigenvalues whose associated Jordan block is not
diagonal.

To achieve our goal, we further consider the coordinate and time-rescaling transformations
$$
{\bf q}_i=(x_i,y_i,z_i)\to{\bf Q}_i=(X_i,Y_i),\ \ t=r^{3/2}\tau,
$$
$$x_i=rX_i,\ \ y_i=rY_i, \ \  z_i=\pm [1-r^2(X_i^2+Y_i^2)]^{1/2}, \ i=1,2,3,4. $$
A simple computation shows that if we choose $\omega t=\Omega\tau,$
the angular velocity relative to the new time variable $\tau$ takes the form
$$
\Omega=\pm\bigg[{\frac{24m}{(12-9r^2)^{3/2}}\pm\frac{m_1}{(1-r^2)^{1/2}}\bigg]^{1/2}}.
$$
With the above transformations, and using the fact that
$$X'_i=r^{1/2}\dot{x}_i,\ Y'_i=r^{1/2}\dot{y}_i, \ X''_i=r^2\ddot{x}_i, \ Y''_i=r^2\ddot{y}_i, \ i=1,2,3,4,  $$
the equations of motion become
$$
{\bf Q}_i''= r^3 \sum_{j=1,j\ne i}^4\frac{m_j({\bf Q}_j-f_{ij}{\bf Q}_i)}{(1-f_{ij}^2)^{3/2}}-r^3(\dot{{\bf q}}_i\cdot\dot{{\bf q}}_i){\bf Q}_i,\ \ i=1,2,3,4,
$$
where $'\!=\!\frac{d}{d\tau}$ and $f_{ij}\!=\!({\bf q}_i \cdot{\bf q}_j)\!=\!
r^2(X_iX_j+Y_iY_j)\!+\! z_i z_j$. 

We further introduce the rotating coordinates $\xi_i, \eta_i, \ i=1,2,3,4,$ with
$$
\begin{pmatrix} X_i\\ Y_i \end{pmatrix} =
{\mathcal R}(\Omega\tau)
\begin{pmatrix} \xi_i\\ \eta_i \end{pmatrix}, \ i=1,2,3,4,\  {\rm where}\
{\mathcal R}(\Omega\tau)=
\begin{pmatrix}
\cos\Omega\tau & -\sin\Omega\tau\\
\sin\Omega\tau & \fm\cos\Omega\tau
\end{pmatrix}.
$$
Then $\xi_i\xi_j+\eta_i\eta_j=X_iX_j+Y_iY_j, \ i,j\in\{1,2,3,4\}$, expressions that take the value 1 when $i=j$. Moreover,
$$
\begin{pmatrix} \xi''_i\\ \eta''_i \end{pmatrix}
=\Omega^2 \begin{pmatrix} \xi_i\\ \eta_i \end{pmatrix}
+2\Omega \begin{pmatrix} \fm\eta'_i\\ -\xi'_i \end{pmatrix}
+{\mathcal R}^{-1}(\Omega\tau) \begin{pmatrix} X''_i\\ Y''_i \end{pmatrix},
\ i=1,2,3,4.
$$
A straightforward computation shows that the new equations of motion have the form
\begin{eqnarray}
\label{rotcoord}
\left(\!\begin{array}{c} \xi_i^{\prime\prime} \\ \eta_i^{\prime\prime} \\\end{array}\!\right)
\!\!\!\!&=&\!\!\!\!
2 \Omega \!\left(\!\begin{array}{c} \fm\eta_i^\prime \\ -\xi_i^\prime \\\end{array}\!\right)
\!+\!\Omega^2\!\left(\!\begin{array}{c} \xi_i \\ \eta_i \\\end{array}\!\right)\! -\!
r^2 h_i\!\left(\!\begin{array}{c} \xi_i \\ \eta_i \\\end{array}\!\right)\!+
\!\!\sum_{\substack{j=1\\ j\ne i}}^4\!\! m_j g_{i,j}^{-\frac{3}{2}} \!\left[\!
\left(\!\begin{array}{c} \xi_j \\ \eta_j \\\end{array}\!\right)\! -\! f_{i,j}
\left(\!\begin{array}{c} \xi_i \\ \eta_i \\\end{array}\!\right)\! \right],
\end{eqnarray}
where
\begin{equation}
\label{Omega}
\Omega^2={\frac{24m}{(12-9r^2)^{3/2}}\pm\frac{m_1}{(1-r^2)^{1/2}}},
\end{equation}
$$
p_{i,j} = \xi_i \xi_j+ \eta_i \eta_j, \quad \rho_i^2=\xi_i^2+\eta_i^2 \quad
z_{i,j} = (1-r^2 \rho_i^2)(1-r^2 \rho_j^2),
$$
$$g_{i,j}= \rho_i^2 + \rho_j^2 - 2 s_{i,j} p_{i,j} \sqrt{z_{i,j}}
- r^2 (p_{i,j}^2 + \rho_i^2 \rho_j^2),
$$
$$
h_i =  \Omega^2 \rho_i^2 +
2 \Omega (\xi_i \eta_i^\prime - \eta_i \xi_i^\prime)
+ ((\xi_i^\prime)^2 + (\eta_i^\prime)^2) + \frac{r^2}{1- r^2\rho_i^2}
(\xi_i \xi_i^\prime + \eta_i \eta_i^\prime)^2,
$$
$$
f_{i,j} = r^2 (\xi_i \xi_j+\eta_i\eta_j) + z_i z_j
=r^2 (\xi_i \xi_j+\eta_i\eta_j) + s_{i,j} \sqrt{z_{i,j}},
$$
$$
s_{i,j} = {\rm sign}(z_i z_j) = \left\{
\begin{array}{cc} {\rm sign}(z):=s, & \quad i=1 \;\; {\rm or} \;\; j=1, \\
1, & \quad i, j \neq 1, \\ \end{array} \right.
$$
which implies that $z=s\sqrt{1-r^2}$.

Before we start to study the stability of the tetrahedral relative equilibria, we must see for
what values of the masses they exist. For this purpose,
we will prove the following result.

\begin{proposition}
Consider a tetrahedral orbit of the curved $4$-body problem in $\mathbb S^2$
with the mass $m_1>0$ fixed at the north pole $(0,0,1)$ and the masses
$m_2=m_3=m_4=:m>0$ fixed at the vertices of an equilateral triangle that
rotates uniformly on $\mathbb S^2$ in a plane parallel with the equator $z=0$. Then, if
the triangle is above the equator, i.e.\ $0<z<1$, tetrahedral relative equilibria exist for any given masses.
If the triangle is below the equator, i.e.\ $-1<z<0$, then

(i) if $0<m_1<\frac{16m}{9\sqrt{3}}$, for any positive value of $\Omega^2$ up to
a maximum it can attain, tetrahedral relative equilibria exist.
In this case, if  $0<m_1 \leq \frac{m}{\sqrt{3}}$,
for any positive value of $\Omega^2$, such that
$0 < \Omega^2< \frac{1}{\sqrt{3}} - \frac{m_1}{m}$,  
there is a unique $z$ with $-1< z < 0$ 
corresponding to a relative equilibrium. Otherwise, there are 
 two distinct values of $z\in(-1,0)$, each
corresponding to a different relative equilibrium;

(ii) if $m_1>\frac{16m}{9\sqrt{3}}$, then there are no tetrahedral relative equilibria.
\end{proposition}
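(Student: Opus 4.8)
The plan is to reduce the existence question to a one--variable analysis of the squared angular velocity as a function of the configuration, and then to a level-set count. Since $z=s\sqrt{1-r^2}$, the map $r\mapsto z$ is a bijection between $r\in(0,1)$ and $z\in(0,1)$ (for $s=+1$) or $z\in(-1,0)$ (for $s=-1$), so counting configurations is the same as counting admissible $r$, or equivalently admissible $u:=r^2\in(0,1)$. A tetrahedral relative equilibrium with the prescribed masses exists precisely when the value of $\Omega^2$ furnished by \eqref{Omega} is positive, so I would study, for the sub-equatorial case $s=-1$, the function
\[
F(u)=\frac{24m}{(12-9u)^{3/2}}-\frac{m_1}{(1-u)^{1/2}},\qquad u\in(0,1),
\]
and observe that for $s=+1$ the analogous expression is a sum of two positive terms, hence strictly positive throughout $(0,1)$. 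This already settles the first assertion: above the equator a relative equilibrium exists for every choice of masses, one for each $r$.

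Next I would record the boundary behaviour $F(0)=\frac{m}{\sqrt3}-m_1$ and $F(u)\to-\infty$ as $u\to1^-$, and then control the shape of $F$ in between. Computing $F'$ reduces the critical-point equation $F'(u)=0$ to $\frac{648\,m}{m_1}=\phi(u)$, where $\phi(u)=(12-9u)^{5/2}(1-u)^{-3/2}$. A logarithmic-derivative computation shows $\phi$ is strictly decreasing on $(0,\tfrac12)$ and strictly increasing on $(\tfrac12,1)$, with $\phi(0)=288\sqrt3$ and $\phi(u)\to\infty$ as $u\to1^-$. In the mass range relevant to part (i) one has $\frac{648m}{m_1}>\phi(0)$ (indeed $m_1<\frac{16m}{9\sqrt3}<\frac{9m}{4\sqrt3}=\frac{648m}{\phi(0)}$), so the level $\phi=\frac{648m}{m_1}$ is attained exactly once, on the increasing branch $u>\tfrac12$. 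Hence $F$ is increasing then decreasing: it has a single interior maximum $\Omega^2_{\max}:=\max_{(0,1)}F$ at some $u^{\ast}\in(\tfrac12,1)$.

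The threshold $m_1=\frac{16m}{9\sqrt3}$ I would obtain as the tangency case $\Omega^2_{\max}=0$, that is, by imposing $F(u)=0$ and $F'(u)=0$ simultaneously. Dividing the two relations eliminates the masses and gives $\frac{12-9u}{1-u}=27$, whence $u=\frac56$; substituting back into $F(u)=0$ yields $m_1=\frac{16m}{9\sqrt3}$. Since $\partial F/\partial m_1=-(1-u)^{-1/2}<0$ for each fixed $u$, the maximum $\Omega^2_{\max}$ is strictly decreasing in $m_1$, and as it vanishes exactly at $m_1=\frac{16m}{9\sqrt3}$ we conclude that $\Omega^2_{\max}>0$ iff $m_1<\frac{16m}{9\sqrt3}$ and $\Omega^2_{\max}<0$ iff $m_1>\frac{16m}{9\sqrt3}$. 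The latter gives statement (ii); the former produces relative equilibria for every $0<\Omega^2\le\Omega^2_{\max}$ in part (i).

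Finally I would count the solutions of $F(u)=\Omega^2$ from the unimodal profile together with the Intermediate Value Theorem. If $m_1\le\frac{m}{\sqrt3}$, then $F(0)\ge0$: for $0<\Omega^2<F(0)$ the increasing branch stays strictly above $\Omega^2$ on $(0,u^{\ast}]$, so the only interior crossing lies on the decreasing branch, giving a unique $z$; for $F(0)<\Omega^2<\Omega^2_{\max}$ each branch is crossed once, giving two values of $z\in(-1,0)$. If $\frac{m}{\sqrt3}<m_1<\frac{16m}{9\sqrt3}$, then $F(0)<0$ and every level $0<\Omega^2<\Omega^2_{\max}$ meets both branches, giving two values of $z$ throughout, which is the ``otherwise'' case (the transition value of $\Omega^2$ being $F(0)=\frac{m}{\sqrt3}-m_1$, matching the stated bound up to the normalization of $\Omega^2$). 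I expect the main obstacle to be the rigorous shape analysis, namely proving that $F$ has exactly one interior critical point in the relevant parameter range: once the unimodality of $F$ is established, the threshold value, the monotonicity of $\Omega^2_{\max}$ in $m_1$, and the final counting all follow cleanly, and this unimodality is precisely what the single minimum of the auxiliary function $\phi$ at $u=\tfrac12$ is designed to deliver.
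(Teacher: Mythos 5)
Your proof is correct, and it follows the paper's overall strategy---reduce existence to a level-set count for one scalar function expressing the squared angular velocity in terms of the configuration---but the execution of the two key steps is genuinely different, and the comparison is instructive. The paper works in the variable $u=-z$, so its function $F(u;\Gamma)=-\Gamma/u+8/\bigl(\sqrt{3}(1+3u^2)^{3/2}\bigr)$ diverges to $-\infty$ at the equator end and is finite at the pole end, and its critical points are governed by $\Gamma=g(u)$ with $g(u)=72u^3/\bigl(\sqrt{3}(1+3u^2)^{5/2}\bigr)$ having a single maximum $144/(25\sqrt{15})$ at $u=1/\sqrt{2}$; generically this produces \emph{two} critical points $u_1<1/\sqrt{2}<u_2$, a maximum and a minimum of $F$. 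In your variable $u=r^2$, the critical-point equation $\phi(u)=648m/m_1$ has exactly one root in $(0,1)$ throughout the mass range of part (i), so unimodality of $F$ is immediate; this is a real advantage, because in the paper's parametrization one must still observe (the paper leaves it implicit) that the second critical point $u_2$ falls outside $(0,1)$ whenever $\Gamma<16/(9\sqrt{3})<g(1)=3\sqrt{3}/4$---otherwise a local minimum inside the domain could in principle produce three crossings and spoil the count. The second difference is how the threshold $16m/(9\sqrt{3})$ is located: you solve the tangency system $F=F'=0$, eliminate the masses to get $u=5/6$ (equivalently $z=-1/\sqrt{6}$, the same point as the paper's $u_1=1/\sqrt{6}$), and then use strict monotonicity of $\sup F$ in $m_1$ to get both (i) and (ii); the paper instead derives the closed form $F(u_1,\Gamma)=\Gamma(1-6u_1^2)/(9u_1^3)$ for the critical value and reads the sign change off $\frac{dF}{du}(1/\sqrt{6};\Gamma)=6\bigl(\Gamma-\tfrac{16}{9\sqrt{3}}\bigr)$. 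Both routes yield the same final counting, including your correct observation that the bound $\tfrac{1}{\sqrt{3}}-\tfrac{m_1}{m}$ appearing in the statement is really $F(0)/m$, i.e.\ a bound on $\Omega^2/m$ rather than on $\Omega^2$, a normalization the paper also elides.
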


\begin{proof}
From equation \eqref{Omega}, a tetrahedral relative equilibrium must satisfy the condition
$$
\Omega^2 = m \left(\frac{\Gamma}{z} + \frac{8 }{\sqrt{3}(1+3 z^2)^{3/2}} \right),
\ \ {\rm where}\ \ z= \pm \sqrt{1-r^2}, \quad \Gamma = \frac{m_1}{m}.
$$

\begin{figure}[htbp]
\vspace*{-4mm}
\begin{tabular}{c}
\hspace*{-6mm}\epsfig{file=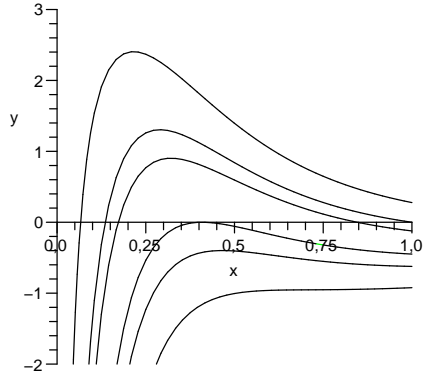,width=190pt} \\
\end{tabular}
\vspace*{-6mm}
\caption{Plot of $F(u,\Gamma) $, $u \in [0,1]$,  
for six distinct values of $\Gamma$. From top to bottom, we took
$\Gamma=0.3,\, 1/\sqrt{3},\,  0.7, \, 16/(9\sqrt{3})$,  $1.2$ and $1.5$, respectively.}
\label{fig1}
\end{figure}

For $0<z<1$, since the right hand side is positive, the statement in the proposition
is obvious. For $-1<z<0$, we introduce $u=-z$. Then the 
equation above can be written as
\begin{eqnarray}
\label{equation3}
\Omega^2 = m F(u;\Gamma),
\end{eqnarray}
where
$$
F(u;\Gamma) = -\frac{\Gamma}{u} + \frac{8 }{\sqrt{3}(1+3 u^2)^{3/2}}.
$$
The behaviour of $F(u;\Gamma) $ for different values of $ \Gamma $ is summarized 
in the Figure \ref{fig1}.
The critical points of $ F(u;\Gamma) $ satisfy $ \Gamma = g(u) $
where
$$
g(u) = \frac{72 u^3}{\sqrt{3} (1+3 u^2)^{5/2}}.
$$
The function $ g(u)$ has a maximum at $u=1/\sqrt{2} $ equal to $ 144/(25 \sqrt{15})$.
For any $ 0 < \Gamma < 144/(25 \sqrt{15}) $ there exist values 
$ 0 < u_1 < 1/\sqrt{2} < u_2 $ such that $ \frac{d F}{d u} =0$. 
It is clear that $ F $ has a maximum at $ u_1 $ and a minimum at $u_2$.
Moreover, 
$$
F(u_1,\Gamma) = \frac{\Gamma(1-6 u_1^2)}{9 u_1^3}
$$
and
$$
\frac{d F}{d u} (1/\sqrt{6}; \Gamma) = 6 \left(\Gamma - \frac{16}{9 \sqrt{3}}\right).
$$
If $ \Gamma > \frac{16}{9 \sqrt{3}} $ then $ u_1 > 1/\sqrt{6} $. Therefore 
$ F(u_1,\Gamma) <0 $ and $ F(u,\Gamma) <0 $ for any $ u \in (0,1)$. Therefore equation (\ref{equation3}) has no real solutions and there are no tetrahedral relative equilibria.
If $\Gamma < \frac{16}{9 \sqrt{3}} $, then $ F(u_1,\Gamma) >0 $ and for any value 
of $ \Omega^2 $ smaller than $ m F(u_1; \Gamma) $  equation (\ref{equation3})
has real solutions. In this case, if $ 0 < \Gamma < \frac{1}{\sqrt{3}} $ and $ \Omega^2 < F(1; \Gamma) = 1/\sqrt{3} -\Gamma $, there is a unique real solution $ u \in (0,1) $, otherwise there are exactly two real solutions $ 0 < u_{R1} < u_{R2} < 1$ that correspond to relative equilibria. 

So, if $ 1/\sqrt{3} < \Gamma < 16/(9 \sqrt{3}) $, we can say that for any positive value of $ \Omega^2$
such that
$$
0 < \frac{\Omega^2}{m} < \frac{\Omega_a^2}{m}, \quad
\Omega_a^2 :=  \max \{F(r,\Gamma) | r \in (0,1) \},
$$
there are two values of $r$,
$\, 0 < r_{E1}< r_{E2} < 1 \,$ with $\Omega^2 = m F(r,\Gamma)$.
If $\Gamma>16/(9 \sqrt{3})$, there are no tetrahedral relative equilibria.
This remark completes the proof.
\end{proof}

\section{The characteristic polynomial}\label{sect:equ}

The goal of this section is to obtain the characteristic polynomial, which will allow us to compute the spectrum of the Jacobian matrix corresponding to a tetrahedral 
relative equilibrium. The computations we perform and the conclusions we draw here will prepare the terrain for understanding the stability of the orbit relative to $\Gamma$ and $z$.

In equations \eqref{rotcoord}, which describe the motion in rotating coordinates, the tetrahedral relative equilibrium becomes the fixed point
$$
\left(\!\begin{array}{c} \xi_1 \\ \eta_1 \\\end{array}\right) =
\left(\!\begin{array}{c} 0\\ 0\\ \end{array}\!\right), \!\quad
\left(\!\begin{array}{c} \xi_2 \\ \eta_2 \\\end{array}\!\right) =
\left(\!\begin{array}{c} 1\\ 0\\ \end{array}\!\right), \!\quad
\left(\!\begin{array}{c} \xi_3 \\ \eta_3 \\\end{array}\!\right) =
\left(\!\begin{array}{c} -1/2 \\ \sqrt{3}/2  \\ \end{array}\!\right), \!\quad
\left(\!\begin{array}{c} \xi_4 \\ \eta_4 \\\end{array}\!\right) =
\left(\!\begin{array}{c} -1/2 \\ -\sqrt{3}/2  \\ \end{array}\!\right),
$$
$$
\xi_i^\prime = \eta_i^\prime =0, \quad i=1,2,3,4.
$$
It is now convenient to introduce the linear operators $S_d$ and $S_o$ acting on $2\times 2$ matrices. $S_d$ changes the signs of the elements on the diagonal, whereas $S_o$ changes the signs of the other remaining elements.

Long but straightforward computations show that the Jacobian matrix corresponding to the vector field $f$ corresponding to system (2) at the fixed point is given by the matrix
$$Df=\left( \begin{array}{cc} 0 & I \\ A & B \\ \end{array} \right),
$$
where
$$B = \Omega\,{\rm diag} (B_1,B_2,B_3,B_4),\ \ A=F+C, \ \ F=\Omega^2\,{\rm diag} (F_1,F_2,F_3,F_4),$$
$$
B_1\!=\!\left(\!\!\!\begin{array}{rc} 0 & 2\\ -2 & \!0 \end{array}\!\!\!\right),\;\;
B_2\!=\! 2\!\left(\!\!\!\begin{array}{rl} 0 & \!z^2\\ -1 & \!0 \end{array}\!\!\!\right),\;\;
B_3\!=\! \frac{1}{2}\! \left(\!\!\!\begin{array}{cc} \sqrt{3}(z^2\!-\!1) & \!3\!-\!z^2 \\
-1\!-\!3 z^2 & \!\sqrt{3} (1\!-\!z^2)\end{array}\!\!\!\right),\;\; B_4\!=\!S_d(B_3), $$
\vspace{2mm}
$$ F_1\! =\! I, \;\; F_2\! =\! \left(\!\begin{array}{cc} -1\!+\! 2 z^2 & \!0 \\
0 & \!0 \\ \end{array}\!\right),\;\;
F_3\! =\! \frac{1}{4}
\left(\!\begin{array}{cc} -1\!+\! 4z^2& \sqrt{3}(1\!-\!4 z^2) \\ \sqrt{3}(1\!-\!4 z^2)& -3\!+\!12z^2 \\
\end{array}\!\right), \;\; F_4\! =\! S_o(F_3), $$
\vspace{2mm}
$$ C= \left(\begin{array}{rrrr} mC_{11} & mC_{12} & mC_{13} & mC_{14} \\
m_1\, C_{21} & X C_{22} & X C_{23} & X C_{24} \\
m_1\, C_{31} & X C_{32} & X C_{33} & X C_{34} \\
m_1\, C_{41} & X C_{42} & X C_{43} & X C_{44} \\ \end{array}\right),
\quad  X=m G^{-5/2}, \quad G=\frac{3}{4} (1+3 z^2),$$
\vspace{2mm}
$$
C_{12}\!=\!\left(\begin{array}{cc} - 2 & 0 \\ 0 & 1 \\ \end{array}\right), \quad
C_{13}\! =\! \frac{1}{4}\! \left(\begin{array}{cc} 1 & 3 \sqrt{3} \\
3 \sqrt{3} & - 5  \end{array}\right),
\quad C_{14}\!=\!S_o(C_{13}),\quad
$$
$$
C_{21}\! =\! \left(\!\!\begin{array}{cc} -2 z^2 &\! 0 \\ 0 & \!1 \\
\end{array}\!\!\right), \quad
C_{31}\!\!=\!\!\frac{1}{4}\!\left(\!\!\!\begin{array}{cc}
3- 2 z^2 &\!\!\!\!\!\!\sqrt{3}(1\!+\!2z^2)\\
\sqrt{3}(1\!+\!2z^2)&\!1 - 6z^2\end{array}\!\!\!\right)\!, \;
\quad C_{41}\! =\!S_o(C_{31}),
$$
\vspace{2mm}
$$
C_{23}\! =\! \frac{3}{8}  \left(\!\!\begin{array}{cc}
-1+9 z^2 -18 z^4  & \sqrt{3} (1-z^2 +6 z^4) \\
3 \sqrt{3} (-1+3 z^2) & 5-3 z^2 \end{array}\!\!\right),
\quad C_{24}\! =\!S_o(C_{23}),
$$
\vspace{2mm}
$$
C_{32}\!\! =\!\! \frac{3}{8}  \!\left(\!\!\!\begin{array}{cc}
-9 z^4\! -\! 6z^2\!+\!5 & \!\!\sqrt{3} (3 z^4\!+\!4 z^2\!-\!1) \\
3 \sqrt{3} (3 z^4\!-\!2 z^2\!+\!1) &  -9z^4+12 z^2 -1 \end{array}\!\!\!\right)\!,
$$
\vspace{2mm}
$$
C_{34}\!\! =\!\! \frac{1}{4}  \!\left(\!\!\!\begin{array}{cc}
3\!+\!9 z^2 & 3 \sqrt{3} (3 z^4- 5 z^2 + 2) \\
0 & \!\!\!\!\!3 - 27 z^4 \end{array}\!\!\!\right)\!,\;
C_{42}\! =\!S_o(C_{32}),\; C_{43}\! =\!S_o(C_{34}),
$$
\vspace{2mm}
$$
C_{11}=\frac{3z}{2}\;I,\quad
C_{22}=\frac{3}{4}\; \left(\!\begin{array}{cc} 1-15 z^2 & 0\\ 0& -2 +12 z^2 \end{array}\!\right),
$$
$$
C_{33}=\frac{3}{16}\; \left(\!\begin{array}{cc} -5+ 21 z^2 & 3\sqrt{3}(1-9 z^2) \\ 
3\sqrt{3}(1-9 z^2) & 1-33 z^2 
\end{array}\!\right),\quad C_{44}=S_o(C_{33}).
$$
So, we can write
$$ \Omega^2 = \frac{m_1}{z} + 3 m G^{-3/2}.$$

The eigenvalues of $Df$ are the zeroes of the polynomial
$$ \det(-\zeta^2 I + \zeta B +A) =0.$$
Let us define $\mu$ such that $ \zeta=\Omega \mu $. Then the characteristic
equation becomes
$$ p(\mu) = \det (-\Omega^2 \mu^2 I + \mu \Omega B + A) =0.$$
Let us introduce
$$ S:= - \Omega^2 \mu^2 I + \mu \Omega B + F+C = P(\mu) + C, \qquad
P(\mu):=- \Omega^2 \mu^2 I + \mu \Omega B+ F.  $$
Then
$$ P(\mu)= \Omega^2{\rm diag}(P_1(\mu),P_2(\mu),P_3(\mu),P_4(\mu)), $$
where
$$
P_1(\mu) =  \left(\begin{array}{cc}
1- \mu^2 & 2 \mu \\ - 2 \mu & 1- \mu^2 \\ \end{array} \right), \qquad
P_2(\mu)\! =\! \left(\begin{array}{cc}
-1 - \mu^2 +4 z^2 & 2 \mu z^2 \\ -2 \mu & -\mu^2 \\ \end{array} \right),
$$
$$
P_3(\mu) = \left(\begin{array}{cc}
- \mu^2 - \frac{\sqrt{3}}{2} \mu(1-z^2)  - \frac{1}{4} + z^2 &
\frac{\mu}{2} ( 3 + z^2) + \sqrt{3}(\frac{1}{4}- z^2) \\
-\frac{\mu}{2} (1 + 3 z^2) + \sqrt{3}(\frac{1}{4}- z^2) &
- \mu^2 + \frac{\sqrt{3}}{2}\mu (1-z^2) - \frac{3}{4} + 3 z^2 \\\end{array} \right),
$$
and  $P_4(\mu)$ follows from $P_3(\mu)$ by changing the
sign in $\sqrt{3}$. Then $ p(\mu)= \det(S)$.

Before computing $p(\mu)$, it is convenient to perform some reduction and introduce additional notations. The first integrals associated to the energy and the $SO(2)$ invariance give rise in $p(\mu)$ to the factors $\mu^2$ and $\mu^2+1$, which we can ignore.  To get further, recall first that, if the upper index $^T$ denotes the transposed of a matrix, a $2n\times 2n$ matrix $A$ is called infinitesimal symplectic if it satisfies the equation
$$
JA+A^TJ=0,\ \ {\rm where}\ \ J=\begin{pmatrix}0& I_n\cr 
-I_n& 0\cr \end{pmatrix}\ \ {\rm and} \ \ I_n\ \ {\rm is \ the \ unit\ matrix}.
$$
As the matrix $P(\mu)$ is infinitesimal symplectic, $p(\mu)$ contains only even powers of $\mu$ and, hence, we obtain with the notation $M=:\mu^2$ a simpler expression. We can further reduce the problem by considering a unique mass parameter. In general, we can discuss the stability in terms of the mass ratio $\Gamma=m_1/m$, thus skipping the dependence on $m$. However, to study some limit cases, it will be also useful to  consider $ \varepsilon=m/m_1 $ instead of $ \Gamma $. From now on we will
simply denote the previous $p(\mu)$ by $\hat{p}(M)$, after changing the variable and 
skipping the factors $M$ and $M+1$.

The characteristic polynomial $\hat{p}(M)$ has degree 6 in $M$,
and its coefficients are polynomials of degree 8 in $\Gamma$ 
that depend on $z$. The dependence on $z$ is not of polynomial type due to the factors $G^{-5/2}$ and $\Omega^2$. 
An important difference relative to the curved 3-body problem is that these factors cannot
be easily ``canceled'' when multiplying by a power of $\Omega^2$, unless we take
$m_1=0$. Introducing
$$ 
D=D(z)=G^{-5/2} = \alpha(1+3z^2)^{-5/2}, \ \ {\rm with}\ \ \alpha=\frac{32}{9\sqrt{3}}, 
$$
the expression of $\hat{p}(M)$ becomes a huge polynomial, which can be fortunately simplified in part.

Indeed, the factor $F=4 z \Omega^2 $  appears in $\hat{p}(M)$ with multiplicity 3. 
Skipping it and further renaming the quotient as $\hat{p}(M)$, we obtain a
polynomial of degree 6 in $M$ whose coefficients have degrees 21 in $z$ and 5
in $D(z)$ and $\Gamma$. It is clear that the dependence on $D$ can be decreased
to degree 1, but then the degree in $z$ increases. No other obvious factors
appear. Whenever necessary, we will make the dependence on the other variables explicit
by writing $\hat{p}(M,\Gamma,z,D(z))$.

As it is usually done in the 3-body problem, we can look for values of $z$ and $\Gamma$ related to bifurcations of the zeroes of $\hat{p}(M)$ that lead to changes in the spectrum: either $M=0$ is a root or $\hat{p}(M)$ has a negative root
with multiplicity at least equal to two. In the former case, after dividing by the factor $z^2$, the polynomial $\hat{p}(0,\Gamma,z,D(z))$ has degrees 19, 5, and 3 relative to $z,D(z)$, and $\Gamma$, respectively. In the latter case, after dividing by the factor $F^{27}z^{25}D(z)^2$, the resultant of $\hat{p}(M)$ and $\frac{d}{dM} \hat{p}(M)$ produces a polynomial, denoted by Res$(\Gamma,z,D(z))$, that has degrees 104, 25, and 13 in $z,D(z)$, and $\Gamma$, respectively. (Recall that if two polynomials $P$ and $Q$
have the roots $a_1, a_2,\dots, a_\nu$ and $b_1,b_2,\dots, b_\eta$, respectively,
then they have a common root if and only if Res$(P,Q)=0$, where
$
{\rm Res}(P,Q):=\prod_{i=1}^\nu\prod_{j=1}^\eta(a_i-b_j)
$
is their resultant. In the present case $M$ has to be seen as the variable of
the polynomials and $\Gamma$ and $z$ as parameters.)
Certainly, it can happen that $\hat{p}(M), \hat{p}(0,\Gamma,z,D(z))$, or Res$(\Gamma,z,D(z))$ have some other non-trivial factor. But the dependence in $D$ makes hard to recognize it.

Hence, to study the stability problem, we will combine a numerical scan of the changes in the solutions $M_i,i=1,\ldots,6$, for some grids in $\Gamma$ and $z$, 
with the theoretical analysis done in the vicinity of some limit problems, which we will next introduce.

\section{Three limit problems} \label{sect:limit}

Before proceeding with our numerical computations it is worth studying the behaviour of the system in some simple limit cases, which we will later use to achieve our main goal of understanding the spectral stability of tetrahedral orbits.

\subsection{\bf The restricted problem}  \label{sect:limitm10}

If we take $ m_1=0$, which is equivalent with $\Gamma=0$, the matrix $S$ has the block structure
$$ S = \left( \begin{array}{cc} \Omega^2 P_{1}(\mu)+ m C_{11} & \tilde{C} \\
0 & Z(\mu) \\ \end{array}\right), $$
where
$$
\Omega^2 P_1(\mu)+ m C_{11} = \left( \begin{array}{cc} \Omega^2 (1-\mu^2) + \frac{3 m}{2} z &
2 \mu \Omega^2 \\
- 2 \mu \Omega^2 & \Omega^2 (1-\mu^2) + \frac{3 m}{2} z \\
\end{array} \right) $$
and  $Z(\mu)$ is a $6\times 6$ matrix such that all the terms have either a factor
$ \Omega^2 = 3 m G^{-3/2}$ or a factor $ X= mG^{-5/2}$.
Then
$$
\det(S)= \det (\Omega^2 P_{1}(\mu)+ m C_{11}) \det(Z(\mu)).
$$
Note that from the matrix $Z(\mu)$ we recover the eigenvalues, and so the spectral stability of the Lagrangian orbits of the curved 3-body problem studied in \cite{masimo}. 
These results, to be used in the next section, can be summarized as follows.
The determinant of $ Z(\mu) $ is a polynomial in $M $.
After eliminating the  factors $ M $, $ M^2 +1 $, and the exact solution given by $ M_0 = -2 z^2(5-3 z^2)/(1+3 z^2) $, we obtain a polynomial of degree 3 in $M$, $Q(M)$ (see also \cite{masimo}), with polynomial coefficients in $r^2 = 1 -z^2$.
In \cite{masimo} it was proved that there exist three values of $r$, $ 0 < r_1< r_2< r_3 < 1$, where  Hamiltonian-Hopf bifurcations occur, such that, for $ r \in (r_1,r_2) \cup (r_3,1)$, the zeroes of $ Q $ are negative, and consequently those Lagrangian orbits for the curved 3-body problem are linearly (and not only
spectrally) stable. For $ r \in (0,r_1) \cup (r_2,r_3) $, $Q$ has a pair of complex zeroes, so the corresponding Lagrangian orbits are unstable.
(For more details about Hamiltonian-Hopf bifurcations see \cite{meer}.)

In the restricted case, the stability of the zero-mass body located at $(0,0,1)$ can be obtained by studying the matrix 
$\Omega^2 P_1(\mu)+ mC_{11}$. 
A simple computation shows that
$$ \frac{1}{\Omega^4} \det (\Omega^2 P_1(\mu)+ m C_{11})=\mu^4-c \mu^2 + \Big(\frac{c}{2}+2\Big)^2,
\qquad c= z G^{3/2} -2. $$
Then
\begin{eqnarray}
M=\mu^2=\frac{1}{2}(c\pm\sqrt{-8(c+2)})=\frac{1}{2}(c\pm\sqrt{-8zG^{3/2}}).\label{zeropm}
\end{eqnarray}
If $z >0$, $ \mu^2$ becomes a complex number with real part different from zero.
But if $z<0$, we obtain a couple of negative values for $M$, with an only exception that appears for $c=-4$, i.e.\ $z^2(1+3z^2)^3=256/27$ ($z\approx
-0.73176195875$). For this $z$, one of the values of $M$ is zero and the other
value is negative. Of course, when $z$ moves away from this exceptional value,
the zero value of $M$ becomes negative again. So, we can draw the following conclusion.

\begin{proposition} \label{prop:rest}
Considering the dynamics of the infinitesimal mass in the above restricted problem, the tetrahedral relative equilibrium is spectrally stable for negative values of $z$, but unstable for positive $z$.
\end{proposition}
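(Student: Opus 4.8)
The plan is to exploit the block-triangular structure of $S$ that emerges when $\Gamma=0$. Since $\det(S)=\det(\Omega^2 P_1(\mu)+mC_{11})\,\det(Z(\mu))$, the dynamics of the infinitesimal mass at the north pole is governed entirely by the $2\times 2$ block $\Omega^2 P_1(\mu)+mC_{11}$, which decouples from the block $Z(\mu)$ describing the Lagrangian orbit of the three equal masses. I would therefore restrict attention to this block and read off the four eigenvalues $\zeta=\Omega\mu$ that it contributes to the spectrum.

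First I would compute $\det(\Omega^2 P_1(\mu)+mC_{11})$ explicitly. Using the form of $P_1(\mu)$, the value $C_{11}=\tfrac{3z}{2}I$, and the relation $\Omega^2=3mG^{-3/2}$ valid at $\Gamma=0$, this reduces, after dividing by $\Omega^4$, to the quartic $\mu^4-c\mu^2+(c/2+2)^2$ with $c=zG^{3/2}-2$, as displayed above. Setting $M=\mu^2$ gives a quadratic whose two roots are $M=\tfrac12\!\left(c\pm\sqrt{-8(c+2)}\right)=\tfrac12\!\left(c\pm\sqrt{-8zG^{3/2}}\right)$, where I have used $c+2=zG^{3/2}$.

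The heart of the argument is then a sign analysis of these roots in terms of $z$, noting that $G=\tfrac34(1+3z^2)>0$ for all admissible $z$. For $z>0$ the radicand $-8zG^{3/2}$ is negative, so $M$ is genuinely complex with nonzero imaginary part; the two conjugate values of $M$ produce four square roots $\mu=\pm\sqrt{M}$, at least one of which has positive real part, whence $\zeta=\Omega\mu$ has positive real part and the orbit is unstable. For $z<0$ the radicand is positive, so both values of $M$ are real, and I would conclude that each is $\le 0$ — making $\mu$, and hence $\zeta$, purely imaginary, i.e.\ spectrally stable — by Vieta's relations: the product of the roots equals $(c/2+2)^2\ge 0$, while their sum equals $c=zG^{3/2}-2<-2<0$. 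Two real numbers with non-negative product and negative sum are both non-positive, which is exactly what is required.

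The only point demanding care is the degenerate configuration in which the product vanishes, $c/2+2=0$, that is $zG^{3/2}=-2$, equivalently $z^2(1+3z^2)^3=256/27$ (the value $z\approx-0.73176195875$). There one root of the quadratic becomes $M=0$ while the other equals $c=-4<0$; the associated $\zeta=0$ still has zero real part, so spectral stability is preserved, and for $z$ moving away from this value both roots return to being strictly negative. I expect this boundary case to be the main — and essentially only — subtlety: one must check that the root $M=0$ does not violate the definition of spectral stability and that this is an isolated exceptional value rather than an interval of marginal or unstable behaviour. Everything else is a routine sign inspection, which is why the full statement follows once the quartic and its discriminant have been written down.
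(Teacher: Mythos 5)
Your proposal is correct and follows essentially the same route as the paper: isolating the $2\times 2$ block $\Omega^2 P_1(\mu)+mC_{11}$, reducing its determinant to $\mu^4-c\mu^2+(c/2+2)^2$ with $c=zG^{3/2}-2$, analysing the sign of the radicand $-8zG^{3/2}$, and treating the exceptional value $c=-4$ (i.e.\ $z^2(1+3z^2)^3=256/27$) where one root $M$ vanishes. Your Vieta argument for the negativity of the real roots when $z<0$, and your observation that non-real $M$ forces a square root $\mu$ with positive real part, merely make explicit two sign checks the paper asserts without detail.
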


\subsection{\bf The $1 + 3$ limit case: $m=0$ } \label{subs:1+3}

As opposed to the previous restricted case, we now study the problem in which the
body lying at $(0,0,1)$ is massive, whereas the other three bodies have zero mass. It is easy to see that
$$ \det(S) = (\Omega^2)^8 \det (P_1(\mu)) \det (P_2(\mu))\det (P_3(\mu))\det (P_4(\mu)) =
(\Omega^2)^8 \mu^6 (1+\mu^2)^5. $$
Skipping the trivial factors, the characteristic equation $\hat{p} (M)=0 $ reduces in the limit to
\[ T(M)=M^2(M+1)^4=0, \]
which yields the roots $\mu=0, \mu=+\ii$, and $\mu=-\ii$, all of them of multiplicity
4. In other words, all the characteristic multipliers are equal to 1.

This outcome is not unexpected. Indeed, if we use $\eps=m/m_1$ as mass parameter, all the three equal masses are zero in the limit $\eps=0$ and their mutual influences
vanish. Hence, the problem reduces to three copies of the 2-body problem,
formed by the mass at the north pole and a body of zero mass. The changes in this highly degenerate situation for small $\eps>0$ will be
studied in Section \ref{sect:pertur}.

\subsection{\bf The solutions with $z=0$} \label{subs:z=0}

As we are also interested in the behaviour of orbits for small $z>0$, it is also necessary to consider the solutions with $z=0$. Skipping the trivial factors, we obtain again the limit equation $T(M)=0$ for all $\Gamma$. Again, this fact is not  surprising because, for any positive $\Gamma$, we have that $\Omega\to\infty$ when $z\to 0$ and, therefore, the relative equilibrium requires larger and larger angular velocity. This means that the centrifugal force and the reaction of the constrains that keep the bodies on $\mathbb S^2$ are so large that the attraction of the mass lying at the north pole can be neglected.

\vspace*{2mm}
Regarding the cases in Subsections \ref{subs:1+3} and \ref{subs:z=0}, we will further consider the behaviour of the branches emerging from the solutions of $T(M)=0$ when the parameters $\eps$ and $z$ move away from zero. This analysis is cumbersome
due to the presence of two parameters and of some long expressions. Furthermore, when $\eps$ tends to zero, we want to study arbitrary values of $z \in (0,1)$
and, when $z\to 0$, to consider arbitrary  values of $\Gamma$ in $(0,\infty)$. The bifurcations that occur in these cases will be studied in Section \ref{sect:pertur}.

\section{Numerical experiments} \label{sect:numeric}

The results of this section have been obtained using the polynomial $p(\mu)$ computed symbolically with PARI. According to the notation and reductions introduced above, we will also refer to this polynomial as $\hat{p}(M,\Gamma, z,D(z))$.

For given values of $\Gamma$ and $z$, we first computed the zeroes $M_1,\ldots,M_6$ of the polynomial $\hat{p}(M,\Gamma,z,D(z))$. We used for the results plotted here a variable number of decimal digits, going up to 100 or more, and  performed many  additional checks.

Recall that the complex zeroes, $M$, correspond to values of $\mu$ of the form $\pm\alpha \pm\ii\beta$, called complex saddles (CS); the real positive zeroes, giving values  $\pm\alpha$ for $\mu$, are called real hyperbolic (H); and the negative zeroes, yielding $\pm \ii\beta$ for $\mu$,  are called elliptic (E).
Changes in the stability properties occur when the zeroes pass from one type to another. The exceptional cases in which some zeroes of $\hat{p}(M,\Gamma,z,D(z))$ are equal to zero or negative and coincident deserve attention to decide about the spectral stability of the solution, but they generically occur only in a zero-measure subset of $(\Gamma,z)$.

We will further use the coding ${\mbox E}^i{\mbox H}^j{\mbox {CS}}^k$, where the
exponents show the number of zeroes,  $M$, 
of each type. Of course, the exponents satisfy the identity $i+j+2k=6$. 
In Figure \ref{fig:stabreleq} we display some numerical results. In the electronic version of this paper, the colour coding is
\[ {\mbox E}^6 \!\rightarrow\! {\mbox {red}},\;{\mbox E}^4{\mbox {CS}}^1
\!\rightarrow\! {\mbox {green}},\; {\mbox E}^2{\mbox {CS}}^2 \!\rightarrow\!
{\mbox {blue}},\; {\mbox E}^5{\mbox H}^1\!\rightarrow\! {\mbox {magenta}},\;
{\mbox E}^3{\mbox H}^1{\mbox {CS}}^1 \!\rightarrow\! {\mbox {pale blue}}. \]
Hence, the observed transitions correspond to two types of bifurcations:
\begin{itemize}
\item
Hamil\-tonian-Hopf (for red $\rightarrow$ green, green $\rightarrow$ blue, and
magenta $\rightarrow$ pale blue) and
\item 
elliptic-hyperbolic (for red $\rightarrow$
magenta and green $\rightarrow$ pale blue). 
\end{itemize}
The white zones are related to
forbidden $(\Gamma,z)$ domains, which correspond to $\Omega^2<0$. In the printed version of this paper, the colours translate into grey shades as follows: red = black; blue = dark grey; pale blue = grey; magenta = light grey; green = very light grey.
\begin{figure}[ht]
\begin{center}
\begin{tabular}{rr}
\epsfig{file=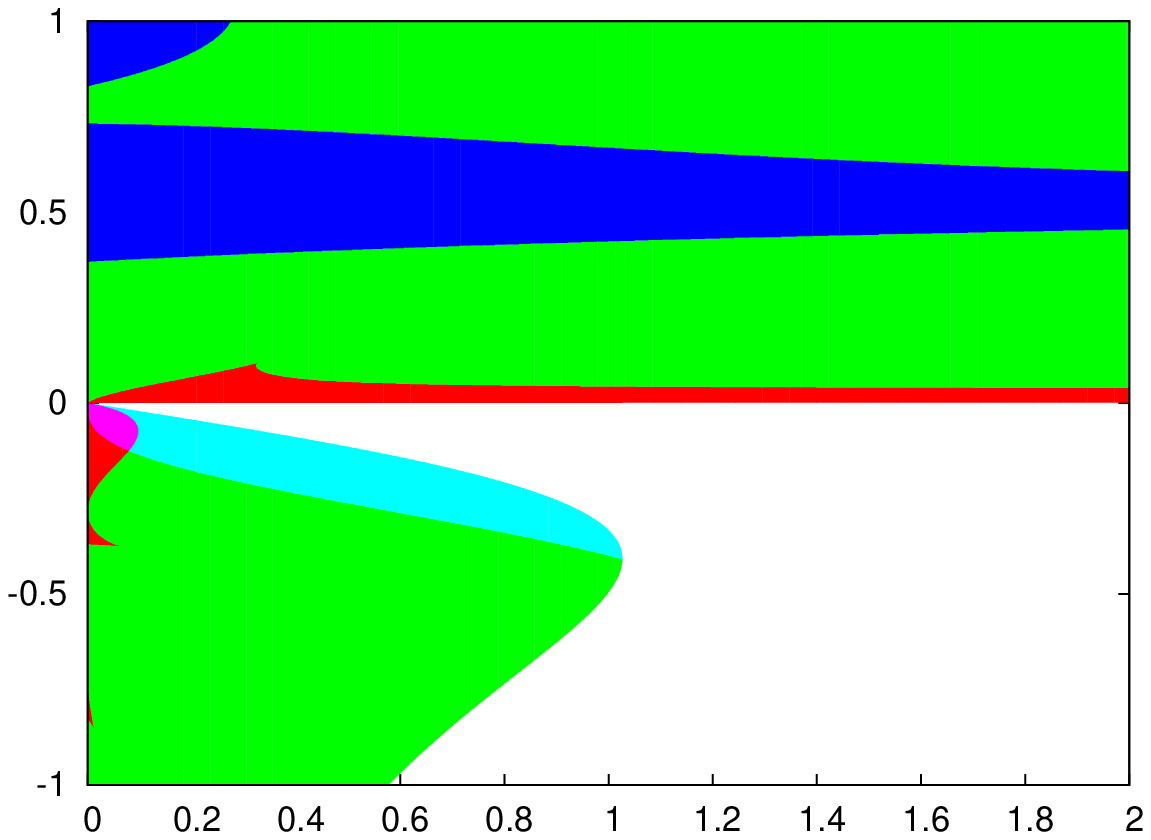,width=7.7cm} &
\hspace{-14mm} \epsfig{file=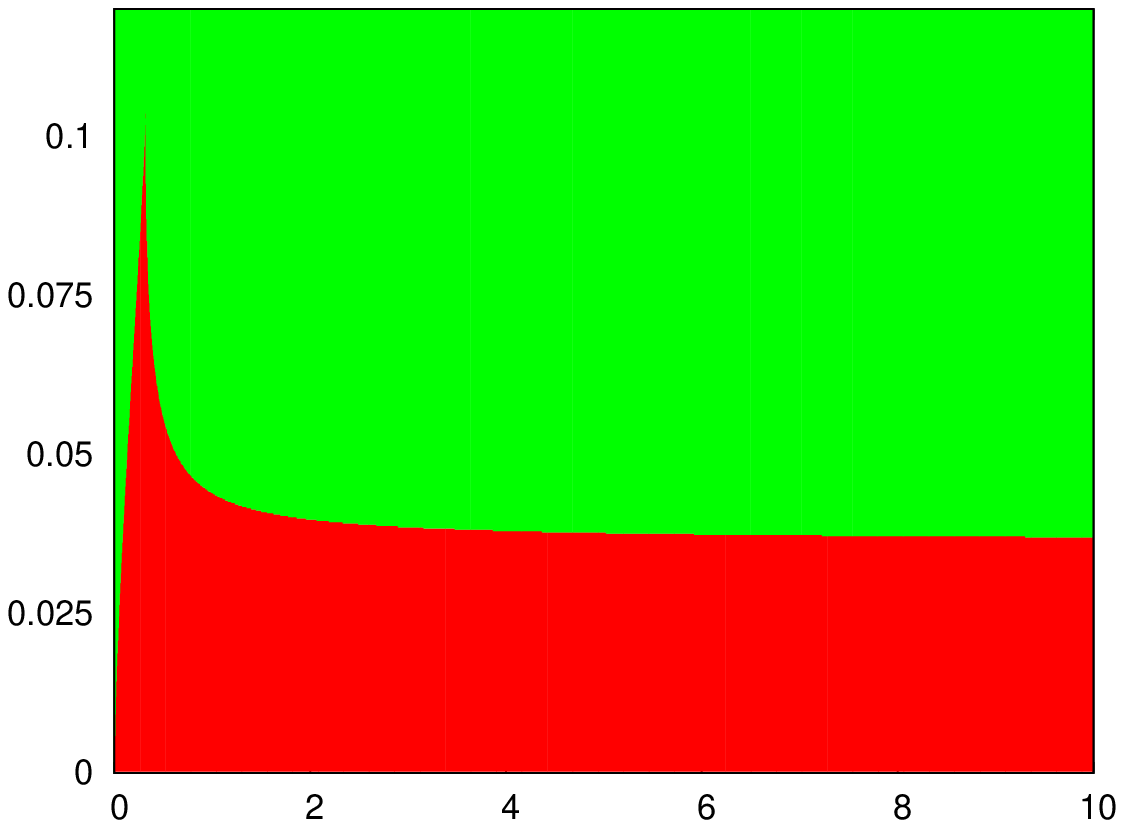,width=7.9cm} \\
\epsfig{file=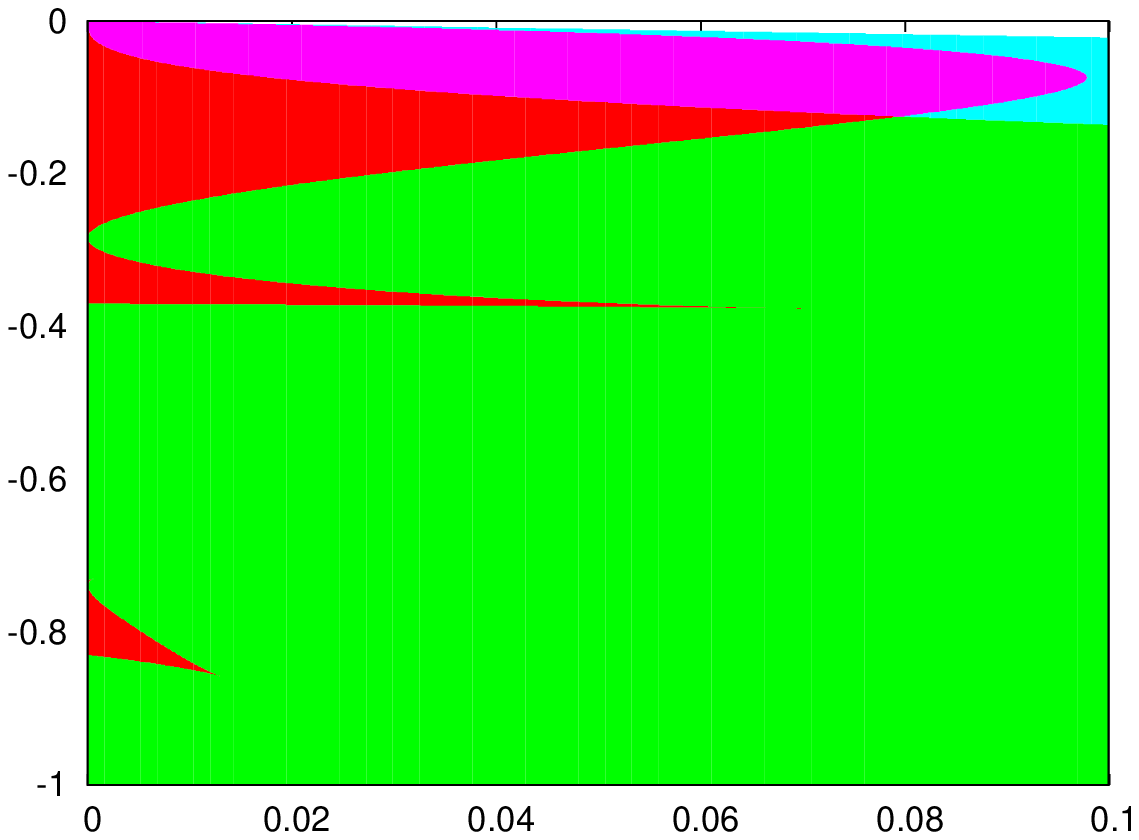,width=7.8cm} &
\hspace{-14mm} \epsfig{file=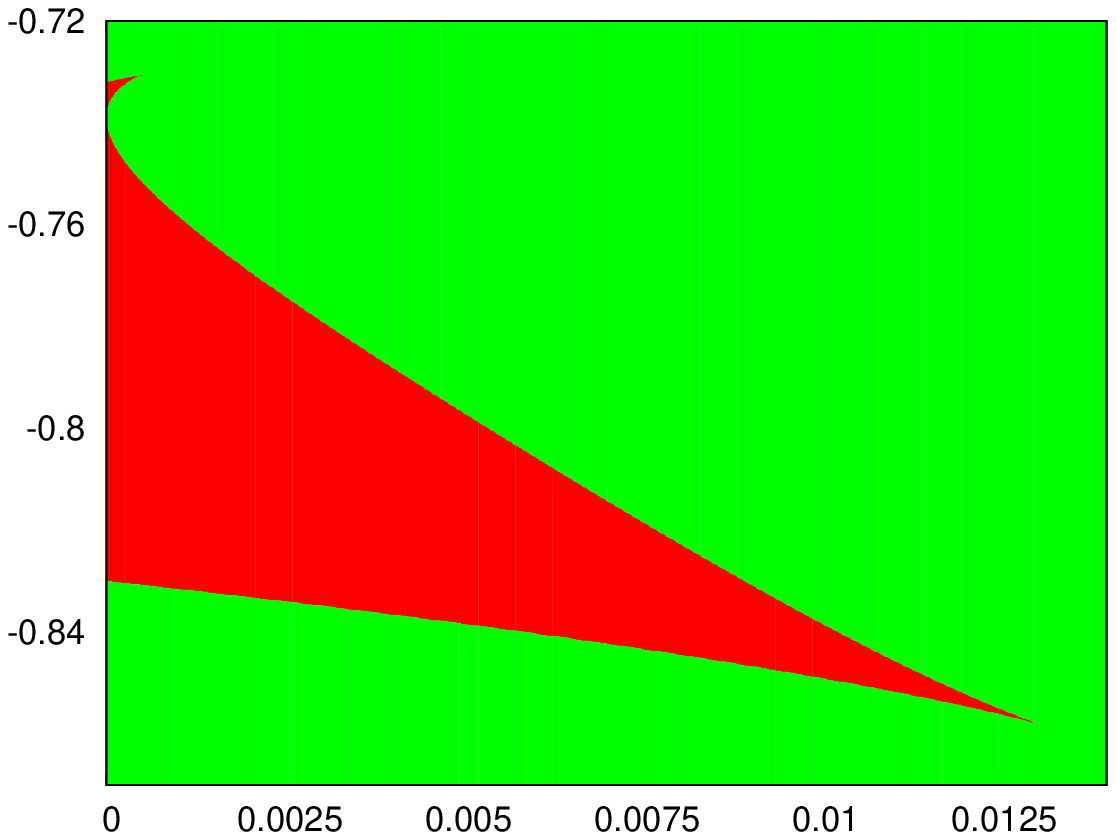,width=7.7cm}
\end{tabular}
\end{center}
\vspace{-2mm}
\caption{Regions of stability for the relative equilibrium orbits given as a
function of the mass ratio $\Gamma$ for the horizontal variable and $z$ for the vertical
variable (see the color code in the text). Top left: a general plot for $\Gamma\in
(0,2]$. Top right: magnification for $\Gamma\in(0,10],z\in(0,0.12]$. At the
bottom we show a magnification of a narrow range $\Gamma\in(0,0.1]$ for $z<0$
(left part) and an additional magnification around the little red
triangle-shaped domain which can be seen near $\Gamma=0,z=-0.8$.}
\label{fig:stabreleq}
\vspace{-2mm}
\end{figure}

We first describe the case $z>0$. The plot shows that for small $\Gamma$ the orbit
is unstable, except that near $z=0$ there is a line, emerging from $\Gamma=0, z=0$, where a super-critical Hamiltonian-Hopf bifurcation occurs and the system becomes totally elliptic. Again, for small $\Gamma$ we find different zones for which one or two CS show up. As $\Gamma\to 0$, the values of $z$ at which the transitions occur tend to
\[ z_1\approx 0.8299852976470169,\quad z_2\approx 0.7318602978602651,\quad
 z_3\approx 0.3702483631504248,\]
which correspond to the values $r_1,r_2$, and $r_3$ (see Section \ref{sect:limitm10})
found in \cite{masimo} for the Lagrangian orbits of the curved 3-body problem in 
$\mathbb S^2$.

When $\Gamma$ increases, as seen in the top left plot, a narrow red stable
domain seems to persist near $z=0$. The top right plot suggests that this is
true up to $\Gamma=10$. For some larger values of $\Gamma$, up to $10^3$, this
estimate seems to be still true. We can further ask about the limit behaviour when $\Gamma\to\infty$. The numerical evidence suggests, on one hand, that the limit value of $z$ up to which the solution is totally elliptic is close to $0.03642$; on the other hand, the boundary of one of the blue domains goes to $z=1$ and the domain disappears. The intermediate blue domain seems to shrink. Figure \ref{fig:zposglob} provides more information: the blue domain shrinks to a point and increases again to merge with another blue domain born near $\Gamma=2.91, z=0.822$. It is remarkable that to the left of that point a tiny totally elliptic zone appears (one has to magnify the plot to see it). The blue domain for large $\Gamma$ seems to tend to a limit width confined by values approaching $0.5$ and $\approx 0.94215$.

\begin{figure}[ht]
\vspace{-2mm}
\begin{center}
\begin{tabular}{cc}
\hspace{-5mm}\epsfig{file=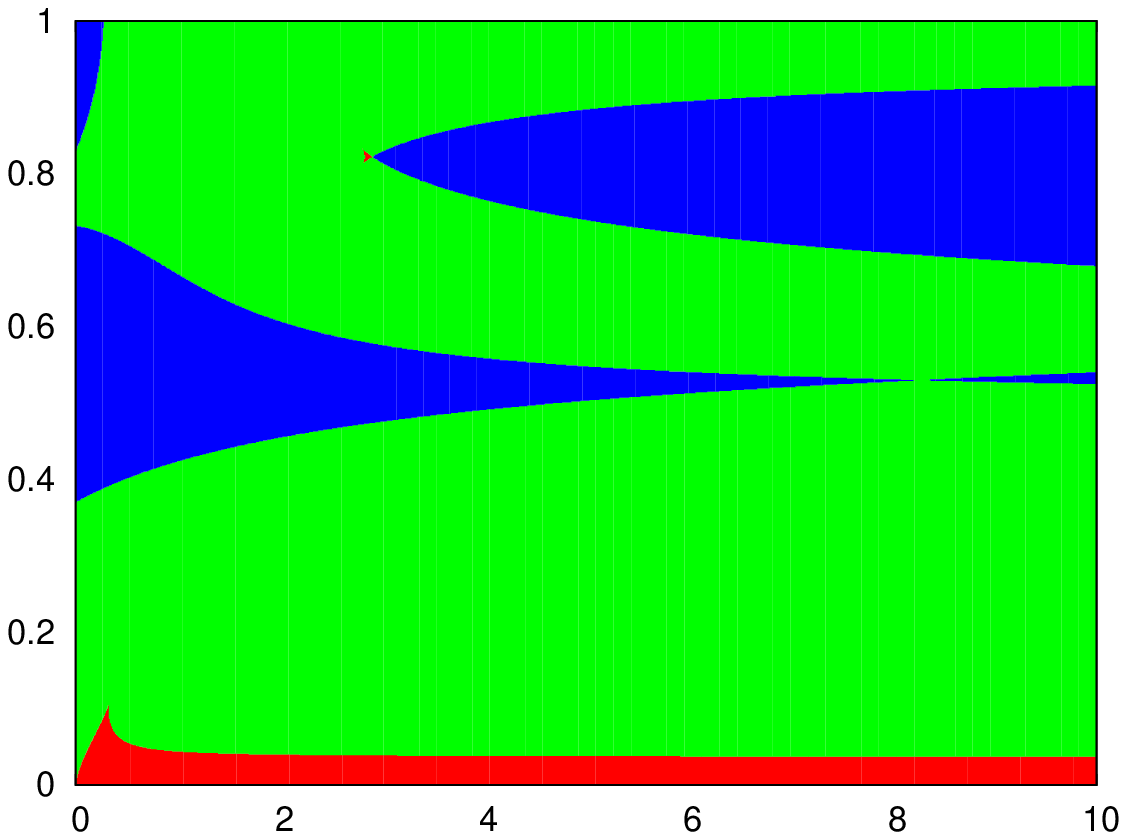,width=7.7cm} &
\hspace{-5mm}\epsfig{file=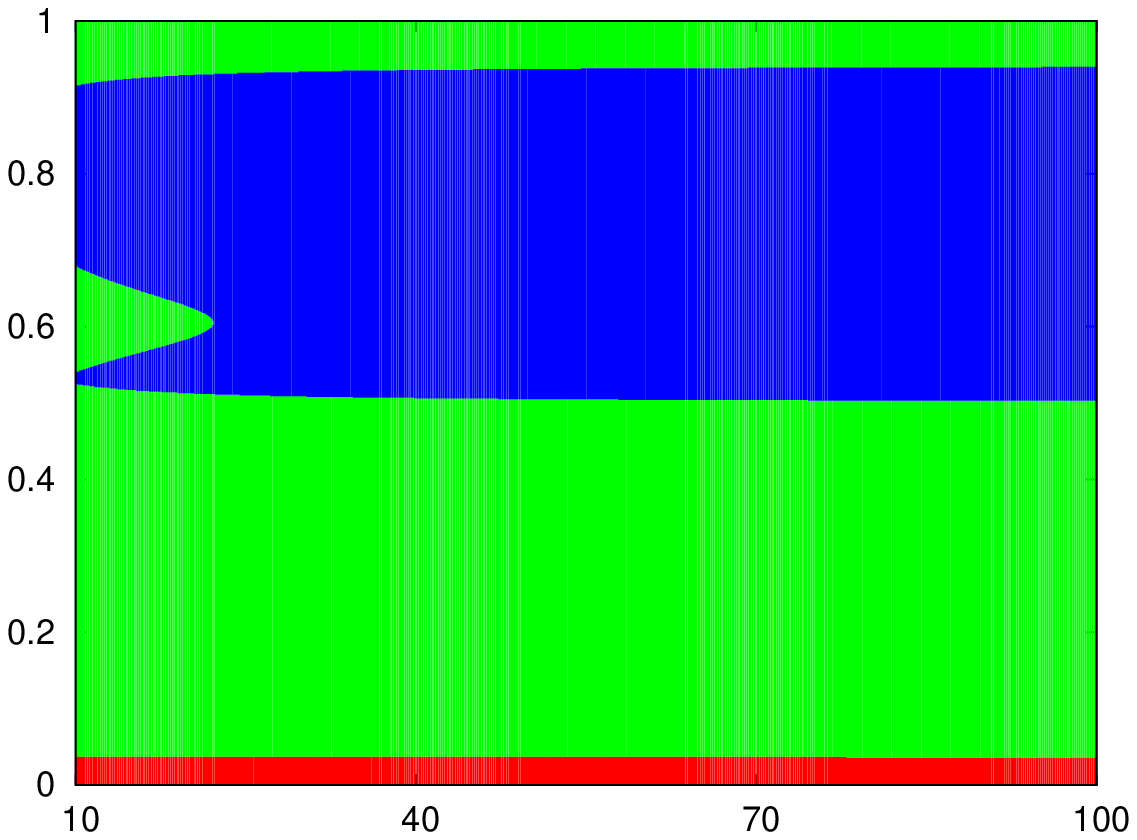,width=7.7cm}
\end{tabular}
\end{center}
\vspace{-2mm}
\caption{Part of the domain $z>0$ for large values of $\Gamma$. The colour codes are
described in the text.}
\label{fig:zposglob}
\end{figure}

These numerical experiments raise the following theoretical questions for $z>0$:
\begin{itemize}
\item [(a)] What happens when $\Gamma\to\infty$ (i.e.\ when $ \varepsilon \to 0$)? 
Do the red, green, and blue
zones in Figure \ref{fig:zposglob} on the right tend to a limit?
\item [(b)] What happens for $z$ very close to zero? In that case the value of
$\Omega$ tends to $\infty$ when $z\to 0$ and the limit is singular.
A priori, some changes cannot be excluded in a tiny strip.
\item [(c)] Which is the local behavior for $\Gamma, z$ when both are positive and close
to 0?
\end{itemize}

We will return to these questions in Section \ref{sect:pertur}.
\bigskip

We further consider the case $z<0$. As we already explained, the value of $\Gamma$ is 
bounded by the condition $\Omega^2=0$. In other terms, the boundary is parametrized by 
$z\in(-1,0)$ as
\[ \Gamma(z)= \Gamma^*(z)= -8z/[\sqrt{3}(1\!+\!3z^2)^{3/2}],\]
which we can also write as $\Gamma^*(z)=-9z(1+3z^2)D/4$. The most
interesting domains appear for small values of $\Gamma$. There are two ranges of $z$, namely $(-z_1,-z_2)$ and $(-z_3,0)$, in which the necessary conditions for linear stability are satisfied, in agreement with Proposition \ref{prop:rest} and the corresponding results obtained in \cite{masimo} for Lagrangian solutions. These ranges extend to small values of $\Gamma$; but there are two exceptions (both shown at the bottom of Figure \ref{fig:stabreleq}), namely when the axis $\Gamma=0$ is tangent to the red domains. These tangent points are located near $z=-0.73892,\,z=-0.28396$, and correspond to Hamiltonian-Hopf bifurcations. A red $\to$ magenta transition is seen ending on a tangency to the vertical axis at $(0,0)$. The transition from the magenta to the pale blue domain seems also to be very close to the boundary $\Gamma=\Gamma^*(z)$ of existence of admissible values of $z$.

These numerical results lead to the following problems in the case $z<0$:
\begin{itemize}
\item [(d)] Prove that the transitions from stability to instability that occur in the restricted
problem persist for $\Gamma>0$.
\item [(e)] Prove that there are exactly two additional values of $z$ for which a
curve of  Hamiltonian-Hopf bifurcations is tangent to $\Gamma=0$.
\item [(f)] Analyze the vicinity of $(\Gamma,z)=(0,0)$ for $z<0$.
\end{itemize}
Like the questions (a), (b), and (c), we will address these problems in the next section.

\section{The perturbation of the limit cases and the main result}\label{sect:pertur}

In this section we prove several results concerning perturbations of limit
cases. The conclusions are summarized in Subsection 6.5. All proofs are
analytical. The only use of some numerical information appears in the
computation of the zeroes of a few polynomials of the form $H(z,D(z))$, a procedure that can be reduced to computing the zeroes of irreducible polynomials in $z$ or by checking that some polynomials have a given sign at a given value of the variable. When we check that some polynomial is zero at a zero of some function, we either use the resultant or compute the zero with increasing number of digits. If $d$ decimal digits are used and the zero is simple (respectively double), we check that the obtained value is zero up to approximately $d$ (respectively $d/2$) digits. We increase $d$ up to a value that exceeds 1000.

We begin with a lemma about the double zeroes of a function $f(x,a,b)$, which
depends nontrivially on two parameters $a$ and $b$, i.e.\ neither
$f_x$ nor $f_a$ nor $f_b$ are identically zero. In the applications to the
present problem, $x$ corresponds to the variable $M$, whereas $a$ and $b$ to $z$
and $\Gamma$, respectively. We would like to see, for instance, if, for fixed $\Gamma$, two real negative zeroes of $\hat{p}$ that collide at a given value of $z$ move away from the real axis, as well as what happens when $\Gamma$ changes. The information we obtain is only based on the properties of $f$. We could exploit the fact that we are dealing with eigenvalues of an infinitesimal symplectic matrix (or a matrix conjugated to it), but some singular limit behaviour, such as when $(\Gamma,z)\to(0,0)$, makes difficult to analyze perturbations of the limit case. Since we are interested in the vicinity of a point $(x^*,a^*,b^*)$, we shift the origin of the coordinate system to that point. We can now prove the following result.

\begin{lemma} Let $f(x,a,b)$ be a real analytic function depending on the
parameters $a,b$. Assume that for $a=b=0$ the function has a zero of exact
multiplicity $2$, located~at $x=0$, i.e.\ $f(0,0,0)=f_x(0,0,0)=0$ and, for
concreteness, $f_{xx}(0,0,0)>0$. We~want to study the behaviour of $f$ in
a neighbourhood of $(0,0,0)$. For fixed $b=0$, we have:
\begin{itemize}
\item [(i)] If $f_a(0,0,0)>0$ when $a$ increases, crossing the value $a=0$, the
roots move away from the real axis. The case $f_a(0,0,0)<0$ is similar when $a$
decreases.
\item [(ii)] If $f_a(0,0,0)=0$, consider $f_{aa}(0,0,0)$ and $f_{xa}(0,0,0)$. If
the discriminant $f_{xa}^2-f_{xx}f_{aa}$ at $(0,0,0)$ is positive, the roots
remain real.
\end{itemize}
Let now $b$ vary. Then:
\begin{itemize}
\item [(a)] Under the assumptions of $({\rm i)}$, there exists a line
$a=h(b)$ along which $f$ has double zeroes in the $x$ variable, and when $a$
increases, crossing the value $a=h(b)$, the roots move outside the real axis.
\item [(b)] Under the assumptions of $({\rm ii)}$, if $f_b(0,0,0)>0$, there exists a curve $b=k(a)$, with positive quadratic tangency to $b=0$ at $a=0$, such that the zeroes of $f$ pass from real to complex when crossing the line
$b=k(a)$.
\item [(c)] Under the assumptions of $({\rm ii)}$, and if $f_b(0,0,0)=0$,
there are two curves, say $h_1(b),h_2(b)$ (eventually complex or coincident),
tending to $(a,b)=(0,0)$ when $b\to 0$. If they are real and distinct, say
$h_1(b)<h_2(b)$, then the zeroes of $f$ are real if $a<h_1(b)$ or $a>h_2(b)$ and
complex if $a\in(h_1(b),h_2(b))$. 
\end{itemize}
\label{thelemma}
\end{lemma}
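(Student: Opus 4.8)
The plan is to reduce the entire statement to the elementary study of a single real-analytic \emph{discriminant} function of the two parameters. Since $f(0,0,0)=f_x(0,0,0)=0$ and $f_{xx}(0,0,0)>0$, the function $x\mapsto f(x,0,0)$ has a zero of order exactly $2$ at $x=0$, so the Weierstrass Preparation Theorem applies and yields a factorization
$$f(x,a,b)=U(x,a,b)\,\bigl(x^2+p(a,b)\,x+q(a,b)\bigr)$$
on a neighbourhood of the origin, where $U$ is real analytic with $U(0,0,0)=\tfrac12 f_{xx}(0,0,0)>0$ and $p,q$ are real analytic with $p(0,0)=q(0,0)=0$. Because $U$ stays positive near the origin, it plays no role in the \emph{location} or in the \emph{real-versus-complex} nature of the two relevant zeroes of $f$; these are exactly the roots $x_\pm=\tfrac12\bigl(-p\pm\sqrt{\Delta}\,\bigr)$ of the Weierstrass factor, real precisely when the discriminant $\Delta(a,b):=p(a,b)^2-4q(a,b)$ is nonnegative and forming a genuine complex-conjugate pair when $\Delta<0$. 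Thus every assertion in the lemma becomes a statement about the sign of $\Delta$ as $(a,b)$ moves.

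The next step is to build a dictionary translating the jet of $f$ at the origin into the jet of $\Delta$. Writing $u_0:=U(0,0,0)$ and differentiating the factorization, one finds $f_a(0,0,0)=u_0\,q_a(0,0)$ and $f_b(0,0,0)=u_0\,q_b(0,0)$, while, whenever $q_a(0,0)=0$, also $f_{xa}(0,0,0)=u_0\,p_a(0,0)$ and $f_{aa}(0,0,0)=u_0\,q_{aa}(0,0)$. Since $u_0>0$, this gives at the origin
$$\Delta_a=-4q_a,\qquad \Delta_b=-4q_b,\qquad \Delta_{aa}=2p_a^2-4q_{aa}=\frac{2}{u_0^{2}}\bigl(f_{xa}^2-f_{xx}f_{aa}\bigr),$$
the last identity using $2u_0=f_{xx}$ and $q_a(0,0)=0$. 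Parts (i) and (ii) now follow by Taylor expanding $a\mapsto\Delta(a,0)$: in case (i), $f_a(0,0,0)>0$ forces $\Delta_a(0,0)<0$, so $\Delta$ passes from positive to negative as $a$ increases through $0$ and the roots leave the real axis; in case (ii), $f_a(0,0,0)=0$ makes $\Delta_a(0,0)=0$, and the stated sign of $f_{xa}^2-f_{xx}f_{aa}$ is exactly the sign of $\Delta_{aa}(0,0)$, so $\Delta(a,0)=\tfrac12\Delta_{aa}(0,0)\,a^2+O(a^3)\ge 0$ keeps the roots real.

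For the two-parameter conclusions (a) and (b) I would apply the Implicit Function Theorem to $\Delta(a,b)=0$, whose solution set is exactly the locus of double zeroes in $x$. Under the hypotheses of (i) we have $\Delta_a(0,0)\neq 0$, so we may solve $a=h(b)$ with $h(0)=0$; since $\Delta_a<0$ along this curve, crossing it with increasing $a$ again sends $\Delta$ from positive to negative, giving (a). Under the hypotheses of (ii) together with $f_b(0,0,0)>0$ we have $\Delta_b(0,0)=-4q_b(0,0)\neq 0$, so instead we solve $b=k(a)$ with $k(0)=0$; differentiating $\Delta(a,k(a))\equiv0$ twice and using $\Delta_a(0,0)=0$ gives $k'(0)=0$ and $k''(0)=-\Delta_{aa}(0,0)/\Delta_b(0,0)>0$, i.e. the asserted positive quadratic tangency, while the sign of $\Delta_b$ fixes the real-to-complex transition as $b$ crosses $k(a)$, giving (b).

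The delicate case, and the step I expect to be the main obstacle, is (c), where $f_a(0,0,0)=f_b(0,0,0)=0$ forces $\Delta_a(0,0)=\Delta_b(0,0)=0$: now $\Delta$ has a critical point at the origin and the Implicit Function Theorem no longer produces the bifurcation curves directly. The key point is that $\Delta_{aa}(0,0)>0$, so $a\mapsto\Delta(a,0)$ again has a zero of order $2$; a \emph{second} application of Weierstrass Preparation, this time to $\Delta$ viewed as a function of $a$ with parameter $b$, yields $\Delta(a,b)=V(a,b)\,\bigl(a^2+P(b)\,a+Q(b)\bigr)$ with $V>0$ and $P(0)=Q(0)=0$, so the double-zero locus is $a=h_{1,2}(b)=\tfrac12\bigl(-P\pm\sqrt{P^2-4Q}\,\bigr)$. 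The branches are real and distinct, complex, or coincident according to the sign of $P^2-4Q$, whose leading term is controlled by the Hessian quantity $\Delta_{ab}^2-\Delta_{aa}\Delta_{bb}$ at the origin; and since $V>0$, the factor $(a-h_1)(a-h_2)$ is negative exactly for $a\in(h_1(b),h_2(b))$, which is precisely where the zeroes of $f$ are complex, as claimed. The care needed to extract these tangency directions and the correct sign pattern from a degenerate critical point of $\Delta$, rather than from a regular level set, is the technical heart of the argument.
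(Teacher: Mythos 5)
Your proof is correct, and it reaches every assertion of the lemma, but it follows a genuinely different route from the paper's. The paper handles (i) and (ii) by a Newton polygon in the $(x,a)$ variables, and for (a)--(c) it normalizes $f$ (e.g.\ $f=x^2+a+\mathcal{O}(x^3,ax,a^2)+bg$ in case (a), $f=x^2-a^2+\mathcal{O}(|(x,a)|^3)+b(1+\hat{g})$ in case (b)), eliminates $x$ by solving $f_x=0$ via the Implicit Function Theorem, substitutes $x=\hat{x}(a,b)$ back into $f$, and then applies either the Implicit Function Theorem (cases (a), (b)) or a Newton polygon in $(a,b)$ (case (c)) to the resulting function of the parameters. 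Your Weierstrass factorization $f=U\,(x^2+px+q)$ produces, up to a positive unit, the same object the paper obtains by elimination: the critical value $f(\hat{x}(a,b),a,b)$ has the sign of $-\Delta(a,b)=-(p^2-4q)$, so both proofs ultimately analyze the same discriminant locus. What your version buys is uniformity and sign transparency: all six statements become Taylor/IFT statements about the single function $\Delta$, the jet dictionary ($\Delta_a=-4q_a$, $\Delta_{aa}=\tfrac{2}{u_0^2}(f_{xa}^2-f_{xx}f_{aa})$, etc.) makes the hypotheses of the lemma literally the sign conditions on $\Delta$, and in case (c) the second preparation $\Delta=V\,(a^2+P(b)a+Q(b))$ yields the two branches $h_{1,2}=\tfrac12(-P\pm\sqrt{P^2-4Q})$ together with the ``complex or coincident'' caveat directly from the quadratic formula (with the leading coefficient of $P^2-4Q$ correctly identified as the Hessian quantity $\Delta_{ab}^2-\Delta_{aa}\Delta_{bb}$ up to a positive factor). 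What the paper's approach buys is economy of tools: it never invokes the preparation theorem, only the Implicit Function Theorem and Newton polygons, and the polygon technique is precisely the one reused throughout Section 6 for the more degenerate multi-parameter bifurcations, where a clean quadratic preparation is no longer available. One small point of care in your argument, worth stating explicitly, is that Weierstrass preparation is being applied in the real analytic category (complexify, prepare, and note the factors are real by uniqueness), both for $f$ and again for $\Delta$; with that remark added, the proof is complete.
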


\begin{proof}
The cases (i) and (ii) are elementary, since the Newton polygon
in $x,a$ involves the vertices $(2,0)-(0,1)$ and $(2,0)-(0,2)$, respectively.

To prove (a) we can assume that $f_{xx}(0,0,0)=1,f_a(0,0,0)=1$, scale the variables, and write $f(x,a,b)=x^2+a+\cO(x^3,ax,a^2)+bg(x,a,b)$. To find a double zero,
we can use the Implicit Function Theorem to express $x$ as a function $x=\hat{x}(a,b)$ from the equation $f_x(x,a,b)=0$. Inserting this $x$ in the equation $f(x,a,b)=0$, we obtain a relation between $a$ and $b$. The Implicit Function Theorem allows us then to express $a$ as a function of $b$.

To prove (b), we scale the variables and apply a linear change in the $(x,a)$
variables, after which we can write that $f(x,a,b)=x^2-a^2+\cO(|(x,a)|^3)+b(1+\hat{g}(x,a,b))$. From the equation $f_x=0$ we obtain $x=\hat{x}(a,b)$ as in (a), and if we insert $x$ in $f$, we can write $b$ as a function of $a$ that starts with a positive quadratic term in $a$. Undoing the linear change and scalings simply deforms the picture linearly.

To prove (c), we proceed as before, obtain $x=\hat{x}(a,b)$
and insert it in $f$. But now the linear term in $b$ is absent, while there is a
nonzero quadratic term in $a$. The existence of the two branches follows by
using a Newton polygon in $a,b$.  
\end{proof}

\begin{remark} In the exceptional case of item (c) in which $h_1(b)=h_2(b)$,
the function $f$ can be written, after an eventual shift of $x$, as
$f(x,a,b)=(x^2-\hat{h}(a,b)^2)\hat{f}(x,a,b)$ with $\hat{f}(0,0,0)\neq 0$, and
$\hat{h}(a,b)=0$ if the parameters $a$ and $b$ satisfy $a=h_1(b)$. Then, for 
values $a,b$ with $\hat{h}(a,b)=0$, $x$ has a double zero and the relations
$f(0,a,b)=0, f_x(0,a,b)=0, f_a(0,a,b)=0,$ and $f_b(0,a,b)=0$ hold.

In particular, the resultant of $f=0, f_x=0$ with respect to $x$, gives
$\hat{h}=0$ with multiplicity 2. However, the resultant of $f_x\!=\!0,f_b\!=\!0$
gives $\hat{h}=0$ with multiplicity 1. Note that in the case (a) the resultant
of $f_x=0,f_b=0$ is far from zero along $\hat{h}=0$. In the case (c) with 
$h_1(b),h_2(b)$ real and distinct, the resultant of $f_x=0,f_b=0$ gives a single
line with $b$ as a function of $a$ with multiplicity 1.
\label{remafterlem}
\end{remark}

\subsection{\bf Analysis of the case $\Gamma\to \infty$}

In this case only $z >0$ has sense. 
Let us consider $\hat{p}(M)$ in terms of $ \varepsilon = 1/\Gamma$.
The term in $\eps^0$ is $M^2(M+1)^4$, whereas the
terms in $\eps^j, j>0$, are polynomials of degree 6 in $M$ that, in turn, have  polynomials in $z,D(z)$ as coefficients.

To discuss how the double zero $M=0$ bifurcates as a function of $\eps$, we
compute the Newton polygon in the $\eps,M$ variables. After simplifying by a
numerical factor, we obtain
\[ 16M^2+\eps M((1728z^5-720z^3+72z)D-192z^4+48z^2)+
\eps^2[(46656z^{10}-38880z^8+11988z^6\]
\[-1620z^4+81z^2)D^2+(-10368z^9+6912z^7-1512z^5+108z^3)D+576 z^8-288z^6+36z^4]
\!=\!0,\]
a quadratic equation in $M$ with discriminant zero. Hence the dominant term of $M$ is
of the form
\[ M_0(\eps)=\frac{3\eps}{4}z(4z^2-1)h(z,D(z)),\quad h(z,D)=(3-18z^2)D+2z.\]

It is easy to check that the factor
$h$ in $M_0(\eps)$ is positive in the interval $z\in(0,1)$. Indeed, $h$ is
positive at $z=0$ and at $z=1$ and $dh/dz$ has only two zeroes in the interval $(0,1)$ at which $h$ is positive. Therefore the only value of $z$ at which $M_0(\eps)$
changes sign in the interval $(0,1)$ is at $z=z_{5,0}=1/2$. Hence, $M_0(\eps)$ becomes positive for $z>1/2$, giving rise to instability, in agreement with the lower bound of the blue domain for large $\Gamma$ obtained in Section \ref{sect:numeric}.
However, the analysis up to now shows that the two branches emerging from $M=0$
are real and coincide. It could happen that higher order terms take them away
from the real axis even for values of $z\in (0,1/2)$.

As usual, we introduce another variable $N$ by the transformation $M=M_0(\eps)+\eps N$. 
Substitution into the characteristic equation and division by $\eps^2$ gives the
dominant terms in the new Newton polygon. They turn out to be, 
up to a numerical factor, of the form
$$ N^2+\eps 3^3 z^5(z^2-1/4) g(z,D), \ g(z,D)=h(z,D)k(z,D)^2,\ k(z,D)=D
(9z^3-6z)-1. $$ 
We note that  $g(z,D)$ is positive. Therefore
\[ M(\eps)=M_0(\eps)\pm\eps^{3/2} \sqrt{3^3 z^5 (1/4-z^2) g(z,D)}. \]
This shows that the roots $M$ evolving from zero are real, negative, and distinct
for $z<1/2$, and are complex with positive real part for $z>1/2$. When the variable $z$ crosses the value $z=1/2$, a Hamiltonian-Hopf bifurcation occurs.

Let us analyze the solutions evolving from the quadruple solution $M=-1$. 
We introduce a new variable,
which we denote again by $N$, such that now $M=-1+\eps N$. The lower order term
in $\eps$ is a term in $\eps^4$ whose coefficient $Q(N,z)$ is a polynomial of
degree 4 in $N$ with polynomials in $z,D(z)$ as coefficients.

First we would like to determine the behaviour of the function for $z$ small. 
The dominant terms are
\[8N^4-18z\alpha N^3+108z^3\alpha N^2+1701z^6\alpha^2 N-78732z^9\alpha^3, \]
where, we recall, $\alpha=D(0)$. A Newton polygon method tells us that the
dominant terms in the solutions are
\[N_1=9z\alpha/4,\quad N_2=6z^2,\quad N_3=-36z^3\alpha,\quad N_4=81z^3\alpha/4.
\]
In particular all the solutions are simple and  negative, ensuring local spectral
stability near $z=0$.

To study the behaviour of the function for larger values of $z$, we compute the resultant
of $Q(N,z)$ and $\frac{d}{dN}Q(N,z)$. After skipping some powers of $z$ and $D$,
we have a polynomial of degree 40 in $z$ and degree 10 in $D$. It is easy to
check that this polynomial has only two simple zeroes for $z\in(0,1)$, located at
\[z_{4,0}\approx0.036420258329089021,\quad z_{6,0}\approx0.942152758989663983.\]
At $z_{4,0}$ a couple of roots meet and become complex, whereas at $z_{6,0}$
these roots return to the real domain.

Hence, we can now summarize the bifurcations for $\eps$ small as follows.

\begin{proposition} \label{prop:epsmall}
For $\eps=1/\Gamma$ tending to zero, there exist three functions, $z_4(\eps),
z_5(\eps)$, and $z_6(\eps)$, tending to $z_{4,0}, z_{5,0}$, and $z_{6,0}$,
respectively, at which Hamiltonian-Hopf bifurcations occur. They are sub-critical
at $z_5(\eps)$ and super-critical at $z_4(\eps)$ and $z_6(\eps)$.
Therefore the character of the fixed points is of type ${\rm E}^6$ for $z\in(0,z_4(\eps))$,
of type ${\rm E}^4{\rm CS}^1$ for $z\in(z_4(\eps),z_5(\eps))\cup(z_6(\eps),1)$, and
of type ${\rm }E^2{\rm CS}^2$ for $z\in(z_5(\eps),z_6(\eps))$.
\end{proposition}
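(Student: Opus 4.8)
The plan is to synthesize the proposition from the local $\eps$-expansions established above for the two groups of nontrivial roots: the $\eps^{3/2}$ expansion for the pair emanating from the double root $M=0$, and the quartic $Q(N,z)$ with its resultant, governing the four roots emanating from the quadruple root $M=-1$. Since the limit polynomial $T(M)=M^2(M+1)^4$ has its six nontrivial roots grouped as $M=0$ (multiplicity $2$) and $M=-1$ (multiplicity $4$), for $\eps$ small the six roots split by continuity into a pair near $0$ and a quadruplet near $-1$, and it suffices to track the two groups separately; recall also that in the $\mu$-plane a real negative root $M$ is an elliptic pair, and that the collision of two such pairs into a complex quadruplet is exactly a Hamiltonian-Hopf bifurcation. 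To produce $z_4(\eps)$ and $z_6(\eps)$, I would use that, after the reduction $M=-1+\eps N$ and division by $\eps^4$, two of the $N$-roots collide precisely where $\mathrm{Res}(Q,\frac{d}{dN}Q)$ vanishes; since $z_{4,0}$ and $z_{6,0}$ are \emph{simple} zeroes of this resultant in $(0,1)$, the collision there is a double root of exact multiplicity two, so Lemma~\ref{thelemma}(a) applies, the transversality $f_a\neq0$ being furnished by the simplicity of the resultant zero and its sign fixing the direction at each point. This yields unique analytic branches $z_4(\eps),z_6(\eps)$ along which the double root persists, with the pair leaving the real axis at $z_4(\eps)$ and rejoining it at $z_6(\eps)$. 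For $z_5(\eps)$, the explicit expression $M(\eps)=M_0(\eps)\pm\eps^{3/2}\sqrt{3^3z^5(1/4-z^2)g(z,D)}$ with $g>0$ shows that the radicand changes sign in $(0,1)$ only at $z=1/2$; the Implicit Function Theorem applied to the (analytic) discriminant of the reduced quadratic then promotes this transition to a branch $z_5(\eps)$ with $z_5(0)=1/2$.

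Next I would read off the counts and the criticality. Near $z=0$ the four roots $M=-1+\eps N_j$ are simple, real, and negative (their dominant terms $N_1,\dots,N_4$ being real and distinct and the roots themselves lying close to $-1$), while the pair near $M=0$ equals $M_0(\eps)<0$ since $z<1/2$; hence the fixed point is ${\rm E}^6$. Because a real root of the real quartic can turn complex only through a real double root, which for fixed small $\eps$ occurs precisely at $z_4(\eps)$ and $z_6(\eps)$, the quadruplet near $M=-1$ keeps four real negative roots on $(0,z_4(\eps))$ and changes type solely at $z_4(\eps)$ and $z_6(\eps)$; likewise the pair near $M=0$ changes type solely at $z_5(\eps)$. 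Since $z_{4,0}<z_{5,0}<z_{6,0}$, continuity propagates the count to ${\rm E}^6$ on $(0,z_4)$, to ${\rm E}^4{\rm CS}^1$ on $(z_4,z_5)$, to ${\rm E}^2{\rm CS}^2$ on $(z_5,z_6)$ once the $M=0$ pair also becomes complex, and back to ${\rm E}^4{\rm CS}^1$ on $(z_6,1)$ once the quadruplet returns to the real axis. The super-/sub-critical label at each point is fixed by the sign of the leading nonlinear coefficient in the corresponding local normal form---the coefficient of the radical together with the sign of $M_0$ at $z_5$, and the quartic coefficient controlling the opening of the double root at $z_4,z_6$---and these signs, already carried by the expansions above, yield the asserted labels.

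Granting the heavy symbolic computations supplied by the preceding analysis (the explicit $M_0(\eps)$ and its radicand, the quartic $Q$, and the claim that $\mathrm{Res}(Q,\frac{d}{dN}Q)$ has exactly the two simple real zeroes $z_{4,0},z_{6,0}$ in $(0,1)$), the main obstacle is the uniform synthesis. The expansions above are \emph{leading order} in $\eps$ and a priori only local, so one must argue that for a \emph{fixed} small $\eps$ the qualitative picture they predict holds simultaneously across all of $(0,1)$. The delicate points are the overlap between the small-$z$ description used to seed the count as ${\rm E}^6$ and the general-$z$ resultant description on $(0,z_{4,0})$, together with control near the endpoints $0,1$ and near each $z_{j,0}$. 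These are manageable because the three limiting values are distinct and interior: for $\eps$ small enough the curves $z_4(\eps),z_5(\eps),z_6(\eps)$ stay separated and bounded away from $0$ and $1$, and on each closed subinterval disjoint from them the truncated expansions dominate the discarded higher-order terms uniformly.
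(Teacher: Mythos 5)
Your proposal takes essentially the same route as the paper: the paper's proof of this proposition is a one-line appeal to the preceding $\eps$-expansions (the $\eps^{3/2}$ splitting of the pair emanating from $M=0$ with sign change of the radicand only at $z=1/2$, and the quartic $Q(N,z)$ whose resultant with $\frac{d}{dN}Q$ has exactly the two simple zeroes $z_{4,0},z_{6,0}$ in $(0,1)$) combined with items (i) and (a) of Lemma~\ref{thelemma}, which is precisely the synthesis you carry out. The extra glue you supply---transversality for Lemma~\ref{thelemma}(a) deduced from the simplicity of the resultant zeroes, the Implicit Function Theorem on the discriminant of the reduced quadratic to produce $z_5(\eps)$, and the uniform patching of the local expansions across $(0,1)$---is correct and makes explicit steps the paper leaves implicit, rather than constituting a different method.
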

\begin{proof}
The above analysis and items (i) and (a) of Lemma \ref{thelemma} complete the proof.
\end{proof}

\subsection{\bf The case of small positive $z$}

To study this case it is convenient to replace $D(z)$ by $\alpha
(1+3z^2)^{-5/2},$ as done before, and to expand the binomial up to the required
order. First we study the solutions emerging from $M=0$.

We proceed as in the previous subsection, by regarding $M$ as a function of $z$
for $z$ around 0. First we find two branches that coincide at order 1 in $z$: $M=-9\alpha\eps z/4$, where we use again $\eps$ to
denote $\Gamma^{-1}$. Then we seek the terms in $z^2$ that are also
coincident. At the third step there appear two branches in $z^{5/2}$ with
opposite signs. Summarizing, the solutions evolving from $M=0$ are
\[M(z)=-9\alpha\eps z/4+(81\alpha^2\eps^2-24\eps)z^2/16\pm 9(\alpha\eps^3)^{1/2}
z^{5/2}/2.\]
That is, the two values $M_1,M_2$ emerging from $M=0$ are real negative and
they only differ in the $\cO(z^{5/2})$ terms. For further reference we denote
them by $M_1$ (with $+$) and $M_2$ (with $-$).

For the solutions that evolve from $M=-1$, we write $M(z)=-1+N$ and
compute the Newton polygon in the $z,N$ variables. 
After simplifying constants, the dominant terms in the polygon are
\[4N^4-9\alpha zN^3\varepsilon+54\alpha z^3N^2\varepsilon^2+3584z^6N\varepsilon^3-
165888\alpha z^9\varepsilon^4.\]
From this expression we obtain the dominant terms of the four branches, already
separated at this first step,
\[ N_1= 9\alpha\eps z/4,\quad N_2=6\eps z^2,\quad N_3=256\eps z^3/(3\alpha),
\quad N_4=-4096\eps z^3/(27\alpha).\]

We can now summarize the above results as follows.

\begin{proposition} \label{prop:zsmall}
For any fixed value of the mass ratio $\Gamma$ in $(0,\infty)$, there is a range
of values of $z$ close to zero, the upper limit of the range depending on
$\Gamma$, such that the spectral stability of the orbit is preserved.
\end{proposition}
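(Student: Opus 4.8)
The plan is to read off the conclusion directly from the explicit dominant-term expansions already computed in this subsection, combined with the local perturbation results of Lemma~\ref{thelemma}. The statement claims that for every fixed $\Gamma\in(0,\infty)$ there is a $z$-range near zero where spectral stability persists; since spectral stability for a Hamiltonian (infinitesimal symplectic) system means all six values $M_i=\mu_i^2$ are real and negative, it suffices to show that all six branches emanating from the limit solutions of $T(M)=M^2(M+1)^4=0$ remain real and negative for $z$ small and positive.

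First I would treat the two branches springing from the double root $M=0$. The computation above gives
\[
M(z)=-\tfrac{9}{4}\alpha\eps z+\tfrac{1}{16}(81\alpha^2\eps^2-24\eps)z^2\pm\tfrac{9}{2}(\alpha\eps^3)^{1/2}z^{5/2},
\]
so the leading term $-\tfrac{9}{4}\alpha\eps z$ is strictly negative for every fixed $\eps=1/\Gamma>0$ and all small $z>0$ (recall $\alpha=\tfrac{32}{9\sqrt3}>0$). The two branches $M_1,M_2$ differ only at order $z^{5/2}$, hence both are real and negative once $z$ is small enough that the $\cO(z)$ term dominates. Next I would treat the four branches from the quadruple root $M=-1$, writing $M=-1+N$. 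The Newton-polygon analysis already separates them at first order, yielding
\[
N_1=\tfrac{9}{4}\alpha\eps z,\quad N_2=6\eps z^2,\quad N_3=\tfrac{256}{3\alpha}\eps z^3,\quad N_4=-\tfrac{4096}{27\alpha}\eps z^3.
\]
All four are real and tend to $0$ as $z\to0$, so the corresponding $M_j=-1+N_j$ are real and lie in a small neighbourhood of $-1$, hence are negative for $z$ small. Because these dominant terms are simple and distinct, the branches do not collide for small $z$, so no Hamiltonian-Hopf bifurcation occurs in this range; this is precisely the situation of item~(i) of Lemma~\ref{thelemma}, which guarantees that the real roots persist under the perturbation and do not leave the real axis.

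To assemble the conclusion I would argue as follows. For each fixed $\Gamma$, the six dominant terms above are real and the roots near $0$ and near $-1$ are separated (the gap between the $M=0$ cluster and the $M=-1$ cluster is bounded below by a constant, and within each cluster the branches differ at positive order in $z$), so for $z$ below some threshold $z_*(\Gamma)>0$ all six $M_i$ are real and negative, which is exactly the $\mathrm{E}^6$ configuration. The threshold depends on $\Gamma$ through $\eps$, matching the statement that the upper limit of the range depends on $\Gamma$. The main obstacle, and the only point requiring care, is that the expansions are singular in the joint limit $(\Gamma,z)\to(0,0)$: the coefficients carry factors of $\eps$ and $\eps^{1/2}$, so the size of the admissible $z$-window shrinks as $\Gamma\to0$, and one must keep $\Gamma$ fixed (not let it tend to zero simultaneously) when invoking the dominance of the leading terms. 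Once $\Gamma>0$ is frozen, the remainder estimates in the Newton-polygon expansions are uniform for small $z$, and the proof reduces to verifying the signs of the explicitly computed leading coefficients, all of which are manifestly negative (or, for the $N_j$, small), completing the argument.
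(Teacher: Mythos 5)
Your proposal takes essentially the same route as the paper: the paper's proof of this proposition consists precisely of the Newton-polygon/Puiseux expansions you invoke — the two real negative branches from $M=0$ that separate only at order $z^{5/2}$, and the four branches from $M=-1$ with distinct real dominant terms $N_1,\dots,N_4$ — from which the ${\rm E}^6$ (spectrally stable) configuration for small $z>0$ at fixed $\Gamma$ follows, together with the same observation that the admissible $z$-window is not uniform as $\Gamma\to 0$. One small slip worth noting: item (i) of Lemma~\ref{thelemma} describes roots \emph{leaving} the real axis when a parameter crosses a critical value, so citing it to guarantee persistence of real roots is inapposite, but this is harmless since your main argument (simple, distinct, real dominant terms force the branches to remain real and negative) already suffices, exactly as in the paper.
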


This result explains the red domain displayed in the previous figures
for $z >0$ small. Note, however, that, as soon as $\Gamma\to 0$, we have $\eps\to\infty$, and then the range of validity of Proposition \ref{prop:zsmall} is not uniform, 
since it can go to zero when $\Gamma\to 0$. This fact has been already put into the evidence in the Figures at the corner near $(\Gamma,z)=(0,0)$ in the first quadrant, where a bifurcation line is seen to emerge from $(0,0)$. The required analysis follows in next subsection.

\subsection{\bf Study of the vicinity of $(\Gamma,z)=(0,0)$ for both $z>0$ and
$z<0$} \label{sect:near00}

When approaching $(0,0)$ in the $(\Gamma,z)$-plane, we have a
singular problem. Depending on the direction, the value of $\Omega^2$ can tend
to any real non-negative value. Therefore, before proceeding with the analysis, we
must add a short description of the difficulties we face, based on the following numerical experiment.

For this purpose, we wanted $\Gamma$ to be neither too small (to exclude a configuration with too close lines), nor too large (to be inside the domain of interest), and thus chose $\Gamma=0.03$. Then the values of the solutions $M$ were
computed as a function of $z$. The value of $z$ corresponding to $\Omega^2=0$ is
$z^*\approx -0.00649642410717306$. Figure \ref{fig:near00} on the left plots three of
the $M$ values, after multiplying them by 
$\Omega^2$, which is a measure of the distance from $z$ to $z^*$.
For the values which appear
to be almost constant and tend to coincide when $z\to (z^*)^-$, we also
changed the sign. This means that when approaching $\Omega^2=0$, one value of $M$ seems to tend to $+\infty$, whereas two values seem to tend to $-\infty$.

\begin{figure}[ht]
\begin{center}
\begin{tabular}{rrr}
\hspace{-4mm}\epsfig{file=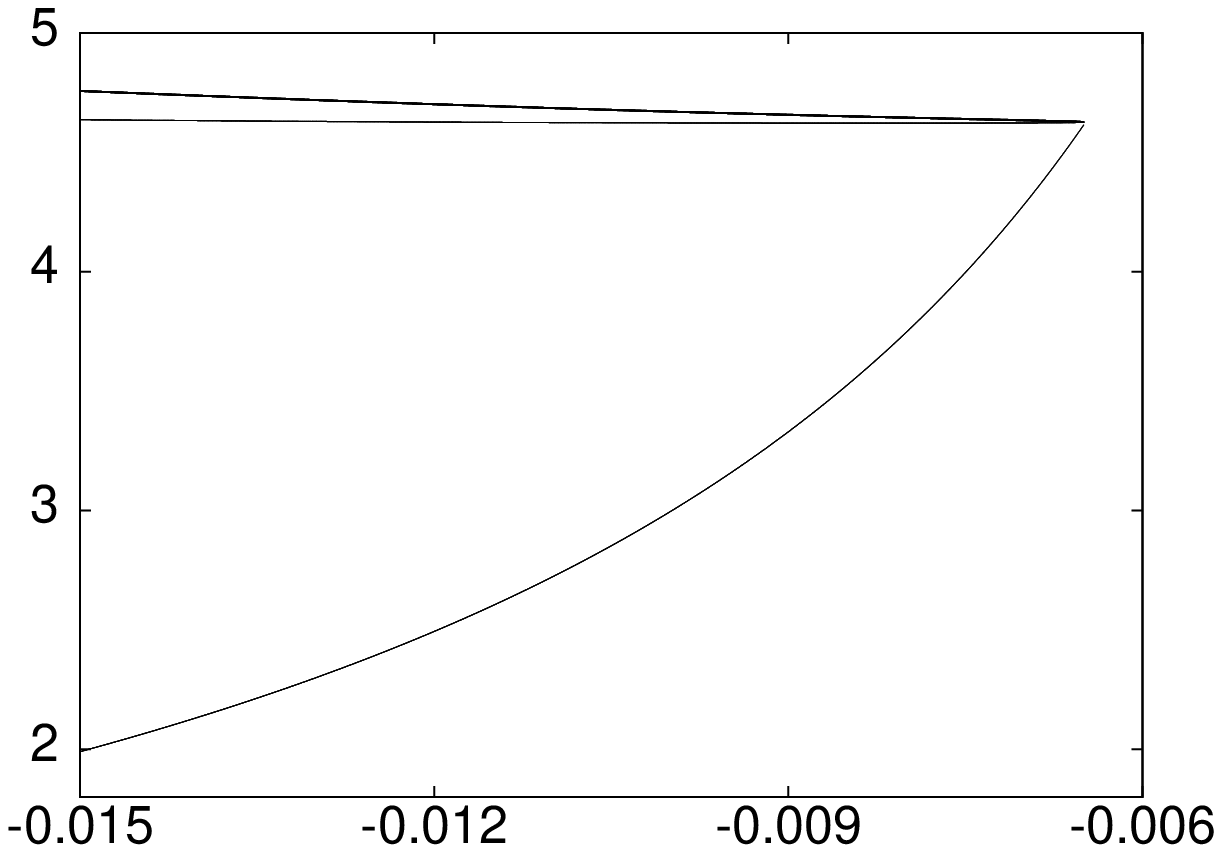,width=5.2cm} &
\hspace{-6mm}\epsfig{file=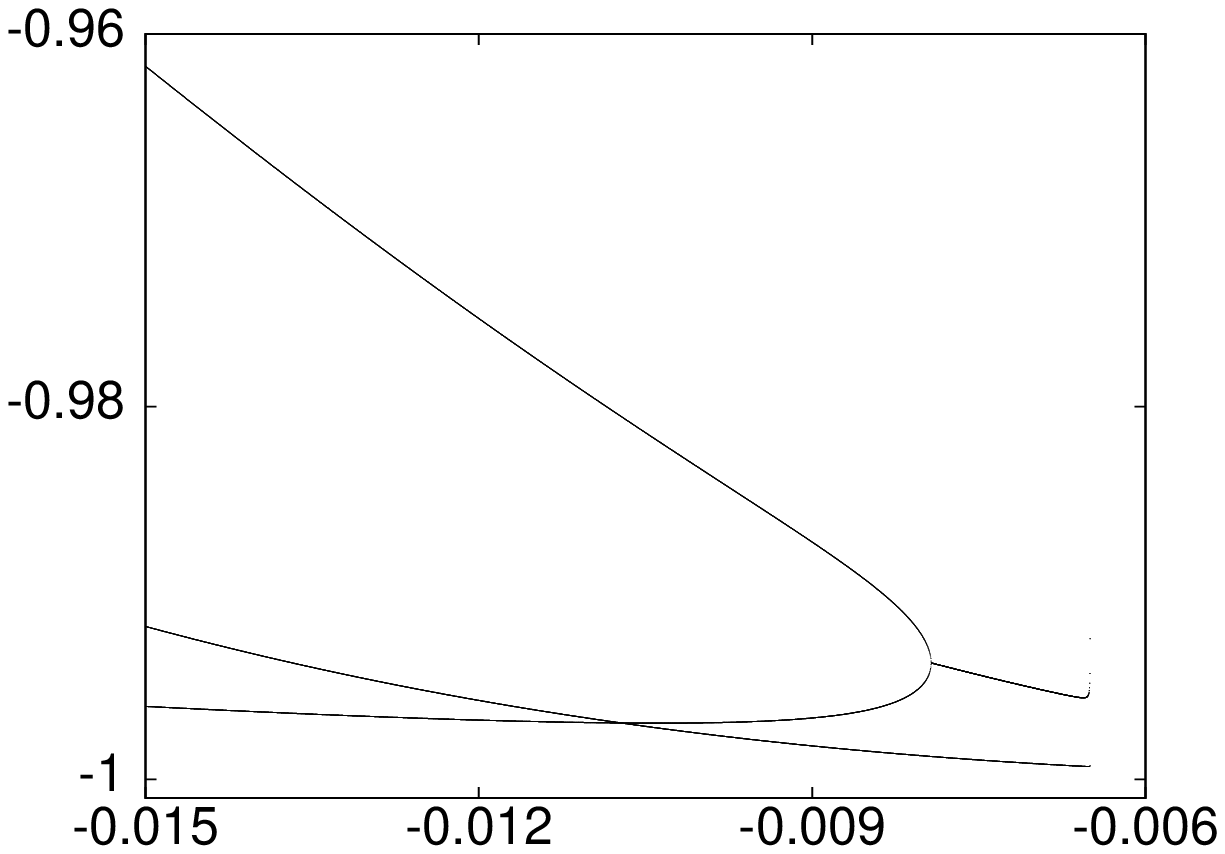,width=5.2cm} &
\hspace{-6mm}\epsfig{file=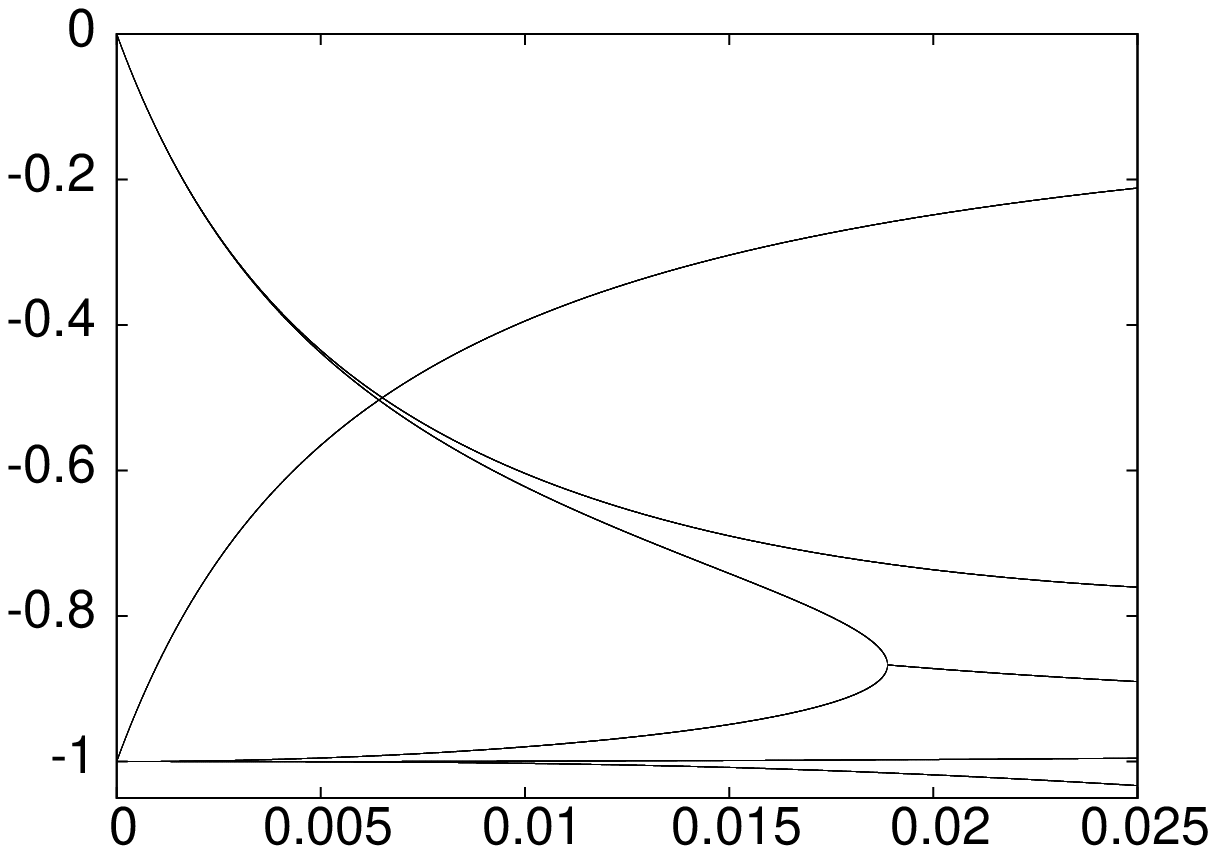,width=5.2cm}
\end{tabular}
\end{center}
\caption{Evolution of the solutions $M_i$ as functions of $z$ for $\Gamma=0.03$
as an illustration of the behaviour of the zeroes of $\hat{p}(M,\Gamma,z,D(z))$
for small $z$ and $\Gamma$. See the text for a detailed explanation.}
\label{fig:near00}
\end{figure}

The other three values are plotted in real scale on the middle
plot. We can clearly see the Hamiltonian-Hopf bifurcation near $z=-0.00793$.
After that value of $z$, we only plot the real part of these solutions.  For 
a previous value near $z=-0.01072$ there is also a double root, which seems
to avoid a bifurcation. Furthermore, a multi-precision study in narrow
intervals provides evidence that, when $z$ comes very close to $z^*$, the
solutions that became complex have a real part that turns to positive (without
giving rise to a bifurcation), whereas the negative one remains negative,
tending to a value, when $z\to z^*$, that tends to zero when $\Gamma\to 0$. In conclusion, we can expect two double zeroes in that domain, only one giving rise
to a bifurcation.

When we pass to values of $z>0$, the behaviour of the zeroes is shown in Figure
\ref{fig:near00} on the right. The solution $M_1$ evolving from 0 remains real while
$M_2$ provides a Hopf bifurcation when it collides with the solution that starts
at $-1+N_2$. The branches starting as $-1\!+\!N_3$ and $-1\!+\!N_4$ remain real
and stay close to $-1$. Similarly, the~solution starting as $-1+N_1$ remains
real, although it coincides with the solutions $M_1$ and $M_2$ at two values of
$z$, which are very close. Hence, we should expect three double roots for that
value of $\Gamma$ at some small $z$ values, only one giving rise to a
bifurcation.

The above numerical evidence will help us obtain some theoretical results.
But before proceeding with the analysis of the bifurcations, it is useful to study the behaviour of the function in the vicinity of the curve $\Omega^2=0$ near $(\Gamma,z)=(0,0)$. This approach presents interest in itself because it gives us the eigenvalues and allows us to interpret the results we will later obtain.

For this purpose, we follow an approach different from the one that led us to Figure
\ref{fig:near00}. Instead of fixing $\Gamma$ and allowing $z$ to vary, we fix $z<0$
near 0 and vary~$\Gamma$. Recall that $\Gamma^*(z)$ has been defined as the
limit value corresponding to $\Omega^2=0$. We further define $\gamma=\Gamma^*(z)-\Gamma$ and want to study what happens when $\gamma\!\to\!0^+$.
Our findings are summarized below.

\begin{proposition} \label{prop:nearcurve}
For $z<0$ close to zero, let $\Gamma^*(z)$ be the value of $ \Gamma$
corresponding~to $\Omega^2=0$ and $\gamma=\Gamma^*-\Gamma$. Then, when
$\gamma\to 0^+$, the roots of the characteristic polynomial, for a fixed value of $z$, and then when $z\to 0^-$, behave as follows:
\begin{itemize}
\item [(i)] Two roots are real and negative, and when multiplied by $\gamma/|z|$ they tend 
to a common value $\chi_1(z)$, behaving close to the limit, 
when $\gamma\to 0^+$,
like $\chi_1(z)\pm \cO(\sqrt{\gamma z}).$ The value of $\chi_1(z)$ tends to $-8/\sqrt{3}$ when $z\to 0^-$ with a dominant term that is linear in $z$.
\item [(ii)] A third root is real and positive, and when multiplied by $\gamma/|z|$
it tends to a value $\chi_3(z)$, from below, behaving close to the limit like
$\chi_3(z)-\cO(\gamma z).$ The value of $\chi_3(z)$ tends to
$8/\sqrt{3}$ when $z\to 0^-$ with a dominant term that is quadratic in $z$.
\item [(iii)] A fourth root is real negative and tends to a value $\chi_4(z)$
linearly in $\gamma$. The limit value $\chi_4(z)$ tends to $-1$ when
$z\!\to\! 0^-$ with a dominant term linear in $z$.
\item [(iv)] The last two roots are complex, and when multiplied by $\psi=\gamma/z^4$ they tend to a common nonzero real value. The real part multiplied by $\psi$ tends to
$\chi_5(z)$, which tends linearly in $z$ to $27/2$ when $z\to 0^-$. The
imaginary parts multiplied by $\sqrt{\psi}$ tend to values $\pm\chi_6(z)$, which
tend linearly in $z$ to $\pm \sqrt{54}$ when $z\to 0^-$.
\end{itemize}
\end{proposition}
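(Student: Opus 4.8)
My strategy is to analyze the characteristic polynomial $\hat{p}(M,\Gamma,z,D(z))$ directly along the curve $\Omega^2=0$ by introducing the single small parameter $\gamma=\Gamma^*(z)-\Gamma$, holding $z<0$ fixed and small, and then extracting the leading asymptotics of each root as $\gamma\to 0^+$, followed by a second limit $z\to 0^-$. The guiding principle is that, since $\Omega^2 = m(\Gamma/z + 8/[\sqrt{3}(1+3z^2)^{3/2}])$ vanishes precisely when $\Gamma=\Gamma^*(z)$, and since $\hat{p}$ carries explicit factors of $\Omega^2$ (entering through $F=\Omega^2\,\mathrm{diag}(F_i)$ and the structure of $P(\mu)$), the roots $M$ must develop a definite scaling behaviour in $\gamma$ near this degeneration. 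The first concrete step is to substitute $\Gamma=\Gamma^*(z)-\gamma$ into $\hat{p}$ and expand, treating $\gamma$ as the primary small quantity. Because $\Omega^2$ is linear in $\gamma$ near the curve (indeed $\Omega^2 = -m\gamma/z + \cO(\gamma^2)$, with $-1/z>0$ for $z<0$), every occurrence of $\Omega^2$ contributes a factor of order $\gamma$, and this lets me organize $\hat{p}$ as a polynomial in $M$ whose coefficients are power series in $\gamma$.

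\textbf{Extracting the six roots.}
The heart of the argument is a Newton-polygon analysis in the $(\gamma,M)$ variables, carried out root-group by root-group according to their scaling. For items (i)–(iii), I expect the three roots that blow up or stay bounded to scale like $M\sim |z|/\gamma$, so I set $M=(|z|/\gamma)\,\chi$ and read off the dominant balance: the leading polynomial in $\chi$ should factor, after dividing by the appropriate power of $\gamma$, into a piece whose roots give $\chi_1(z)$ (a double root, forcing the $\pm\cO(\sqrt{\gamma z})$ splitting of item (i) via a square-root branch in the Newton polygon), the root $\chi_3(z)$ of opposite sign (item (ii)), and the remaining root $\chi_4(z)$ that instead stays bounded and approaches its limit linearly in $\gamma$ (item (iii)). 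For each, the claimed $z\to 0^-$ limits — $\chi_1\to -8/\sqrt{3}$, $\chi_3\to 8/\sqrt{3}$, $\chi_4\to -1$ — should follow by evaluating the leading coefficients at $z=0$, where $D(0)=\alpha=32/(9\sqrt{3})$ and the relevant matrix blocks simplify, and the dominant-term orders in $z$ (linear or quadratic) come from the next term in the $z$-expansion of those coefficients. For item (iv) the scaling is different: the assertion that the complex pair satisfies $M\sim z^4/\gamma$ tells me to set $\psi=\gamma/z^4$ and $M=\chi/\psi$, whereupon the dominant balance should produce a quadratic in $\chi$ with complex roots, yielding real part $\sim\chi_5(z)/\psi$ and imaginary parts $\sim\pm\chi_6(z)/\sqrt{\psi}$; the limits $\chi_5\to 27/2$ and $\chi_6\to\sqrt{54}$ again come from evaluating leading coefficients at $z=0$.

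\textbf{The main obstacle.}
The genuine difficulty is bookkeeping the interaction of the two singular limits together with the non-polynomial dependence on $D(z)=\alpha(1+3z^2)^{-5/2}$. Because $\hat{p}$ has degree $6$ in $M$ with coefficients of degree $21$ in $z$ and $5$ in $D$, isolating the correct dominant terms in each Newton polygon — and verifying that the subdominant terms do not corrupt the claimed leading asymptotics or the sign of the square-root discriminant governing item (i) — requires careful control of which powers of $\gamma$, $z$, and $D$ survive. In particular, the three very different scalings ($M\sim |z|/\gamma$ for four of the roots versus $M\sim z^4/\gamma$ for the complex pair) mean the six roots live on widely separated Newton-polygon slopes, so I must confirm that the polygon indeed has the segments these scalings predict and that no roots are missed or double-counted. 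I would discharge this by working with $\Omega^2$ as an auxiliary variable to expose its multiplicity, expanding $D(z)$ to the order dictated by each scaling, and confirming each leading coefficient is nonzero at $z=0$ so that the stated orders in $z$ are sharp; the appeals to Lemma \ref{thelemma} then certify that the double roots of type (i) genuinely leave the real axis rather than merely touching it.
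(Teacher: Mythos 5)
Your overall route is the same as the paper's: exploit the exact linearity $\Omega^2=-m\gamma/z$, treat $\gamma$ as the small parameter, and resolve the six roots by a Newton polygon / dominant balance analysis under the scalings $M\sim|z|/\gamma$, $M\sim 1$, $M\sim z^4/\gamma$ (the paper organizes the polygon in the two variables $(z,\gamma)$, with coefficients polynomial in $M$ supported on the vertices $(0,5),(1,4),(2,3),(3,2),(7,1),(11,0)$, rather than fixing $z$ first; that difference is cosmetic). However, two of your concrete predictions about what these balances produce are wrong, and they sit exactly where the substance of the proof lies. For item (iii): under the scaling $M=Nz\alpha/\gamma$ the leading polynomial is, up to constants, $N^3(4N-9)^2(4N+9)$ (the paper's cubic $64N^3-144N^2-324N+729$ times $N^3$); its nonzero roots yield only $\chi_1$ (double, $\to -8/\sqrt{3}$) and $\chi_3$ ($\to 8/\sqrt{3}$). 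The bounded root of item (iii) is hidden inside the factor $N^3$, together with the complex pair, and must be extracted from a separate balance: at $\gamma=0$ the surviving part of $\hat{p}$ near $z=0$ is proportional to $z^{11}(M+1)$, whence $M_4\to-1$. Your sentence assigning $\chi_4$ to a root of the leading polynomial while simultaneously saying it ``stays bounded'' is self-contradictory, since any nonzero root of that polynomial corresponds by construction to a root $M$ blowing up like $|z|/\gamma$.

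More seriously, for item (iv): the quadratic produced by the scaling $M=Nz^4/\gamma$ does \emph{not} have complex roots. It is $4N^2-108N+729=(2N-27)^2$, a double \emph{real} root at $27/2$. The complexity of this pair, and the fact that the imaginary parts are $\cO(\psi^{-1/2})$ while the real parts are $\cO(\psi^{-1})$, can only be established at the next order, by substituting $M=M^{(0)}+\Delta M^{(0)}$, computing the new Newton polygon for the correction, and checking the sign of the resulting discriminant — this is precisely the paper's final step. Your assumption that complexity falls out of the leading balance is moreover inconsistent with your own stated conclusion: if the leading quadratic had genuinely complex roots, the imaginary parts would be of the same order $\psi^{-1}$ as the real parts, not $\psi^{-1/2}$. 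You do plan the analogous discriminant check for item (i), where the double root $-8/\sqrt{3}$ splits along the real axis; the identical check for item (iv), with the opposite outcome, is the missing ingredient, and without it the proof of (iv) does not go through.
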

\begin{proof} We only sketch the main steps, the full result following then easily. 
The characteristic polynomial $\hat{p}$ is written as a function of $z$ 
(a polynomial in $z$ and $D(z)$), $\gamma$, and $M$, and we look at the Newton 
polygon in the variables $z,\gamma$, both of them to be seen as small,
involving the exponents $(0,5),(1,4),(2,3),(3,2),(7,1)$ and
$(11,0)$, with coefficients that are polynomials in $M$, given by
\[ -\gamma^5 2^{10}M^2(M+1)^4+\gamma^4z\alpha 2^8 3^2 M(M+1)^3(M+2)+
\gamma^3(z\alpha)^2 2^6 3^4 (M-1)(M+1)^3- \]
\vspace{-5mm}
\[\gamma^2(z\alpha)^3 2^4 3^6 (M+1)^3+ \gamma z^7\alpha^3 2^4 3^9 (M-1)(M+1)-
z^{11}\alpha^3 2^2 3^{12} (M+1).  \]
The last equation has the obvious solution $M=-1$, which is simple. By adding the 
remaining terms in $\hat{p}$, the solution mentioned in (iii), denoted as
$M_4$, is obtained.

Using the side between $(0,5)$ and $(3,2)$, the variables $z$ and $\gamma$ should
be of the same order. This suggests to change the variable $M$ to $N$ by
$M=N z \alpha/\gamma.$ Setting $ \gamma=0$, and simplifying by some powers of
$z$ and $N$, we obtain the polynomial $64N^3-144N^2-324N+729,$ which has the roots $N=9/4$
(double) and $N=-9/4$. Hence, we obtain solutions $M_{1,2}$ whose main terms
are $9\alpha z/(4\gamma)$, as stated in (i), and $M_3$ whose main term is 
$-9\alpha z/(4\gamma)$, as stated in (ii).

Using the side between (3,2) and (11,0), we obtain that $\gamma=\cO(z^4)$, which suggests a
change of variable from $M$ to $N$ defined as $M=Nz^4/\gamma$. As before, setting
$\gamma=0$, and simplifying the powers of $z$, we obtain the polynomial $4N^2-108N+729,$ which has
the roots $27/2$ (double). Hence, we obtain solutions $M_{5,6}$ whose main
terms are $27z^4/(2\gamma)$, as stated in (iv).

Let us denote by $M^{(0)}_i, i=1,\ldots,6,$ the first approximations of the 6 solutions
found up to now. As usual, we write $M_i=M^{(0)}_i+\Delta M^{(0)}_i$ and
substitute them in the initial equation. We then obtain the new Newton polygons and find the corrections, as described in the statement.
\end{proof}

\begin{remark}
The properties described in Proposition \ref{prop:nearcurve} agree with the observed fact that the points close, but below, the line $\Omega^2=0$ belong to the pale blue domain.
\end{remark}

We return now to study a vicinity of $(\Gamma,z)=(0,0)$. We know that the bifurcations we are looking for are associated to $M=0$ or to double roots. We begin with the case $M=0$.

By skipping a suitable factor, the polynomial $\hat{p}(0,\Gamma,z,D(z))$ has
\[ 32 \Gamma^3+144Dz\Gamma^2+162D^2z^2\Gamma+3645 D^3z^5\]
as Newton polygon in the $z,\Gamma$ variables, where its coefficients still depend on $D(z)$.

The last two terms give the branch whose dominant term is $\Gamma=-45Dz^3/2$, which
can be written around $z\!=\!0$ as $\Gamma\!=\!-45\alpha z^3/2\!=\!-80z^3/\sqrt{3}$, in
perfect agreement with the numerical results. This phenomenon is easily identified with the red to magenta transition in Figure \ref{fig:stabreleq} on the left, both top and bottom. The other vertices give rise to a factor $\Gamma=-9zD(1+3z^2)/4+27z^4/
2$, double up to order 4 in $z$, but which is located between $z=0$ and the
curve $\Omega^2=0$ and, hence, outside the admissible domain.

We will further study the double roots. As mentioned at the end of Section
\ref{sect:equ}, the resultant polynomial Res$(\Gamma,z,D(z))$ is huge, but for
$(z,\Gamma)$ near $(0,0)$ it is still feasible to compute the Newton polygon.
The relevant vertices bounding the three sides of the polygon have exponents
$(16,0),(12,2),(9,4)$, and $(0,13)$. After simplifications, the first side from $(16,0)$ to $(12,2)$ leads to branches with dominant terms given by the solutions of the equation $324z^4\alpha^2+\Gamma^2=0$. They are complex and can be discarded.

The second side, with endpoints $(12,2),(9,4)$, gives the condition for the
dominant terms $54z^3\alpha-\Gamma^2=0$, with the real solution
$z=(4\times3^{1/6})^{-1}\Gamma^{2/3}$. It is easily identified as the curve
that separates the green and red domains from each other in Figure \ref{fig:stabreleq}
top, both left and right, near $(0,0)$.

The third side gives branches with dominant terms of the form $\Gamma=9z\alpha/
4, \Gamma=-9z\alpha/4$, and $\Gamma=-36z\alpha/25$ of multiplicities 4, 3, and 2,
respectively. We begin with the case of multiplicity 3. Setting
$\Gamma=z(-9D(z)/4+\gamma)$ in the resultant and simplifying by constants and
powers of $z$ and $D$, we obtain the polynomial
\[ 128\gamma^3+3888 D^2 z\gamma^2+52488 D^3 z^3\gamma+177147 D^4 z^5.\]
This gives raise to one branch which, to order 2 in $z$, and using the full
$D(z)$ function, not only $D(0)=\alpha$, is of the form
$\Gamma=-z9D(z)/4-243z^2D^2/8$, easily identified as the transition from magenta to pale blue. The other root is double, $\gamma=-27 z^2 D/4$. At the next step, $\Gamma=z(-9D(z)/4)+z^3(-27D/4+\psi)$, we obtain again a double solution, $\psi=27z/8$. But the first part of $\Gamma$ given by $-9zD(z)(1+3z^2)/4$ is the
boundary $\Omega^2=0$. Hence, the obtained double branch is already outside the admissible domain at order 4 in $z$.

We now consider the branch beginning with $\Gamma=-36z\alpha/25$ of
multiplicity 2. In fact, the successive Newton polygons that we computed in the
expression of $\Gamma$ as a power series in $z$ always give multiplicity 2.
Hopefully this branch of double zeroes corresponds to the double zeroes that
appear in Figure \ref{fig:near00}, in the middle, and do not give rise to a bifurcation.
We further computed two additional resultants.
Up to now we are using Res$(\Gamma,z,D(z))$, obtained from the elimination of $M$
between $\hat{p}$ and $\partial \hat{p}/\partial M$. After simplification, the degrees in $z,D(z)$, and $\Gamma$ are  104, 25, and 13, respectively, as mentioned before, and the polynomial contains 6779 terms.

Let Res$_2(\Gamma,z,D(z))$ be the resultant from $\hat{p}$ and $\partial\hat{p}/
\partial \Gamma$ and Res$_3(\Gamma,z,D(z))$ the resultant from $\partial \hat{p}/
\partial M$ and $\partial\hat{p}/\partial\Gamma$. The corresponding degrees and
numbers of terms are similar (111, 27, 11, and 7453 for Res$_2$ and 96, 23, 11, and
5474 for Res$_3$). But the important thing is that the Newton polygons of Res and
Res$_2$ give, up to the computed order, a branch of multiplicity 2, while the
one of Res$_3$ is simple. More precisely, the Newton polygon of Res$_3$ has
degree 11 and factorizes as
\[ (4y - 9)^3 (4y + 9)^3 (8y - 9) (25y - 36) (25y + 36) (80y^2 + 720y + 1377),\]
where $y$ denotes the ratio $\Gamma/(zD)$. We are interested in the ratio
$y=-35/25$, simple as claimed. A few terms of the expansion of $\Gamma$ as a
function of $z,D(z)$ are obtained, in a recurrent way, as
\[ \Gamma\!=\!z(g_2-\frac{36D}{25}),\;\;g_2\!=\!z\frac{g_3\!-\!(1944D^2\!+\!54)}
{625}, \;\; g_3\!=\!z\frac{g_4\!+\!(186624D^4\!+\!31968D^2\!+\!144)}{25D},\]
\vspace{-2mm}
\[g_4=z\frac{g_5\!-\!(35831808D^6\!+\!59222259D^4\!-\!12906D^2\!+\!768)}{50D},\]
\vspace{-2mm}
\[g_5=z\frac{g_6\!+\!(3439853568D^8\!+\!10318220802D^6\!+\!42691698D^4\!+
\!226512D^2\!+\!2048)}{25D},\ldots \]

Hence, the branch is double and, according to Lemma \ref{thelemma} and Remark
\ref{remafterlem}, no bifurcation occurs along that line. As a side information
we note that along that double branch, for $z<0$ small, the value of $M$
is close to $-1$.

Finally we consider the branch starting with $\Gamma=9z\alpha/4$ of
multiplicity 4. Writing $\Gamma=z(9D/4+g_2)$ and substituting in Res, we
obtain the polynomial
\[ -16g_2^4+288 z g_2^3-1368 z^2 g_2^2+648 z^3g_2-81 z^4,\]
which factorizes as $(4g_2^2-36zg_2+9z^2)^2$. Hence, the terms of order 2 in $z$
give rise to two double solutions, with coefficients $9/2\pm 3\sqrt{2}$. As in
the previous case, keeping with Res, we obtain double solutions for the computed
next terms.

We will further use Res$_3$. As mentioned before, one of its factors is $4y-9$
with multiplicity 3. Setting $\Gamma=z(9D/4+g_2)$, we obtain the Newton polygon in the $(g_2,z)$ variables as
\[ -512g_2^3+(8748D^2+6656)zg_2^2+(-78732D^2-19584)z^2g_2+(19683D^2+4608)z^3,\]
which factorizes as $(4g_2^2-36zg_2+9z^2)(-128g_2+(2187D^2+512)z)$.
The last factor is irrelevant for our purposes and the quadratic factor gives
the two branches with dominant terms $g_2=(9/2\pm 3\sqrt{2})z$, which are
simple. Hence, as in the previous case, the two branches of Res are double and they give rise to no bifurcation. Additionally, we can mention that these double solutions occur for $M$ close to $-1/2$ and that from the plot in Figure \ref{fig:near00} on the right we expect them to be close.

We can now summarize the results obtained in this case as follows.

\begin{proposition} \label{prop:near00}
In a vicinity of $\Gamma=0,z=0$, for $\Gamma>0$ there are three lines giving
rise to bifurcations, all emerging from $(0,0)$:
\begin{itemize}
\item [(i)] A line of E $\to$ H transition, for $z<0$, having a cubic tangency
with the axis $\Gamma=0$.
\item [(ii)] A line of E $\!^2$ $\to$ CS transition, also for $z<0$, which has a
quadratic tangency with the line corresponding to $\Omega^2=0$.
\item [(iii)] A line of E $\!^2$ $\to$ CS transition, for $z>0$, for which
$z$ is of order $\Gamma^{2/3}$.
\end{itemize}
\end{proposition}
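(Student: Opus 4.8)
The plan is to assemble the Newton-polygon computations carried out above into a single classification and then to decide, branch by branch, whether each candidate curve of double (or zero) roots actually changes the spectral type. The organizing principle is that only two mechanisms can produce a bifurcation here: either a negative root $M$ reaches the origin and turns positive, so that $\mu=\pm{\rm i}\beta$ becomes $\mu=\pm\alpha$ (an ${\rm E}\to{\rm H}$ transition), or two negative real roots collide and leave the real axis, so that two elliptic pairs become a complex saddle (an ${\rm E}^2\to{\rm CS}$, i.e.\ Hamiltonian-Hopf, transition). The first is detected by the vanishing of $\hat{p}(0,\Gamma,z,D(z))$, the second by the vanishing of the resultant ${\rm Res}(\Gamma,z,D(z))$ of $\hat{p}$ and $\partial\hat{p}/\partial M$. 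The decisive subtlety, already flagged in Lemma \ref{thelemma} and Remark \ref{remafterlem}, is that a double root of ${\rm Res}$ may correspond either to a genuine collision that escapes the real axis or to a mere tangential touch that does not; these two situations are separated by the behaviour of the auxiliary resultants.

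For item (i) I would begin from the $M=0$ condition. The Newton polygon of $\hat{p}(0,\Gamma,z,D(z))$ in the $z,\Gamma$ variables has been computed as $32\Gamma^3+144Dz\Gamma^2+162D^2z^2\Gamma+3645D^3z^5$, and its lower side yields the dominant branch $\Gamma=-45Dz^3/2=-80z^3/\sqrt{3}$. Since $\Gamma>0$ forces $z<0$ on this branch, since it has a cubic tangency with the axis $\Gamma=0$, and since it is an $M=0$ branch, it is exactly the ${\rm E}\to{\rm H}$ line of item (i). The complementary branch coming from the other vertices sits between $z=0$ and the curve $\Omega^2=0$, hence outside the admissible region, and is discarded.

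For the two ${\rm E}^2\to{\rm CS}$ lines I would turn to the Newton polygon of ${\rm Res}$, whose relevant vertices are $(16,0),(12,2),(9,4),(0,13)$. The side $(16,0)$--$(12,2)$ gives $324z^4\alpha^2+\Gamma^2=0$, whose branches are complex and are discarded. The side $(12,2)$--$(9,4)$ gives $54z^3\alpha-\Gamma^2=0$, i.e.\ $z=(4\cdot3^{1/6})^{-1}\Gamma^{2/3}$ with $z>0$; Lemma \ref{thelemma} then certifies a genuine collision (the red-to-green line), which is item (iii). On the long third side lie three branches, $\Gamma=9z\alpha/4$, $\Gamma=-9z\alpha/4$, $\Gamma=-36z\alpha/25$, of multiplicities $4,3,2$. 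Refining the multiplicity-$3$ branch with the full $D(z)$ splits it into a simple branch $\Gamma=-9zD(z)/4-243z^2D^2/8$, which agrees with $\Omega^2=0$ only to first order and separates from it quadratically --- this is the magenta-to-pale-blue line of item (ii) --- together with a residual double branch that coincides with the boundary $\Omega^2=0$ through order four and is therefore inadmissible.

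The step I expect to be the main obstacle is to show that the surviving multiplicity-$2$ and multiplicity-$4$ branches produce \emph{no} bifurcation, i.e.\ that the two coinciding negative roots stay on the real axis. Here ${\rm Res}$ alone is blind, since both a Hamiltonian-Hopf collision and a harmless touch appear as double zeroes; this is precisely the ambiguity between the non-degenerate and degenerate versions of Lemma \ref{thelemma}(c). The resolution, following Remark \ref{remafterlem}, is to compute in parallel ${\rm Res}_2$ (eliminating $M$ between $\hat{p}$ and $\partial\hat{p}/\partial\Gamma$) and ${\rm Res}_3$ (between $\partial\hat{p}/\partial M$ and $\partial\hat{p}/\partial\Gamma$), and to verify that each suspect branch occurs with multiplicity $2$ in ${\rm Res}$ and ${\rm Res}_2$ but with multiplicity $1$ in ${\rm Res}_3$ --- the signature $h_1=h_2$ under which the roots only touch and return to the real axis. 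For $\Gamma=-36z\alpha/25$ the factored degree-$11$ Newton polygon of ${\rm Res}_3$ isolates the simple factor $25y+36$, with $y=\Gamma/(zD)$; for $\Gamma=9z\alpha/4$ the corresponding factorization isolates the quadratic $4g_2^2-36zg_2+9z^2$ as a simple factor, so both of its sub-branches $g_2=(9/2\pm3\sqrt{2})z$ are again non-bifurcating. Collecting the three admissible simple bifurcating branches as items (i), (ii), (iii), and discarding every complex, inadmissible, or double branch, completes the proof.
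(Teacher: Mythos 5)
Your proposal is correct and takes essentially the same approach as the paper's own proof: item (i) from the Newton polygon of $\hat{p}(0,\Gamma,z,D(z))$, items (ii) and (iii) from the sides of the Newton polygon of ${\rm Res}(\Gamma,z,D(z))$ (with the multiplicity-3 branch refined by the full $D(z)$), and the exclusion of the multiplicity-2 and multiplicity-4 branches as non-bifurcating via the multiplicity comparison among ${\rm Res}$, ${\rm Res}_2$, and ${\rm Res}_3$ in the spirit of Lemma \ref{thelemma} and Remark \ref{remafterlem}. Nothing essential is missing.
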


\subsection{\bf Analysis of $\Gamma$ near zero}

In this subsection, we need only to consider bifurcations that occur away from
a vicinity of $(\Gamma,z)=(0,0)$, since the behaviour in the neighbourhood of
this point has been already studied in the previous subsection.

For positive $z$, we must only show that the changes of stability that occur for
the 3-body problem persist when we add the small mass $m_1$. As already shown in
Proposition \ref{prop:rest}, the behaviour of the small mass gives instability for the full 4-body problem. Using the results in \cite{masimo} and of Lemma \ref{thelemma}(a), it follows that the changes
of stability of the 3-body problem persist in the case $\Gamma>0$ small.

\medskip

Next we pass to the more interesting case $z<0$. The changes of stability found
in the curved 3-body problem also persist when we pass to $\Gamma>0$, according to
Lemma \ref{thelemma}(a), and the stability of the body of mass zero does not
change the stable domains for $\Gamma=0$. 
However, new changes can occur when passing from $\Gamma=0$ to $\Gamma>0$ if 
some of the additional zeroes of the form (\ref{zeropm}) for the restricted problem coincide with one of the curved 3-body problem. 

\begin{figure}[ht]
\vspace{-3mm}
\begin{center}
\begin{tabular}{rr}
\hspace{-5mm}\epsfig{file=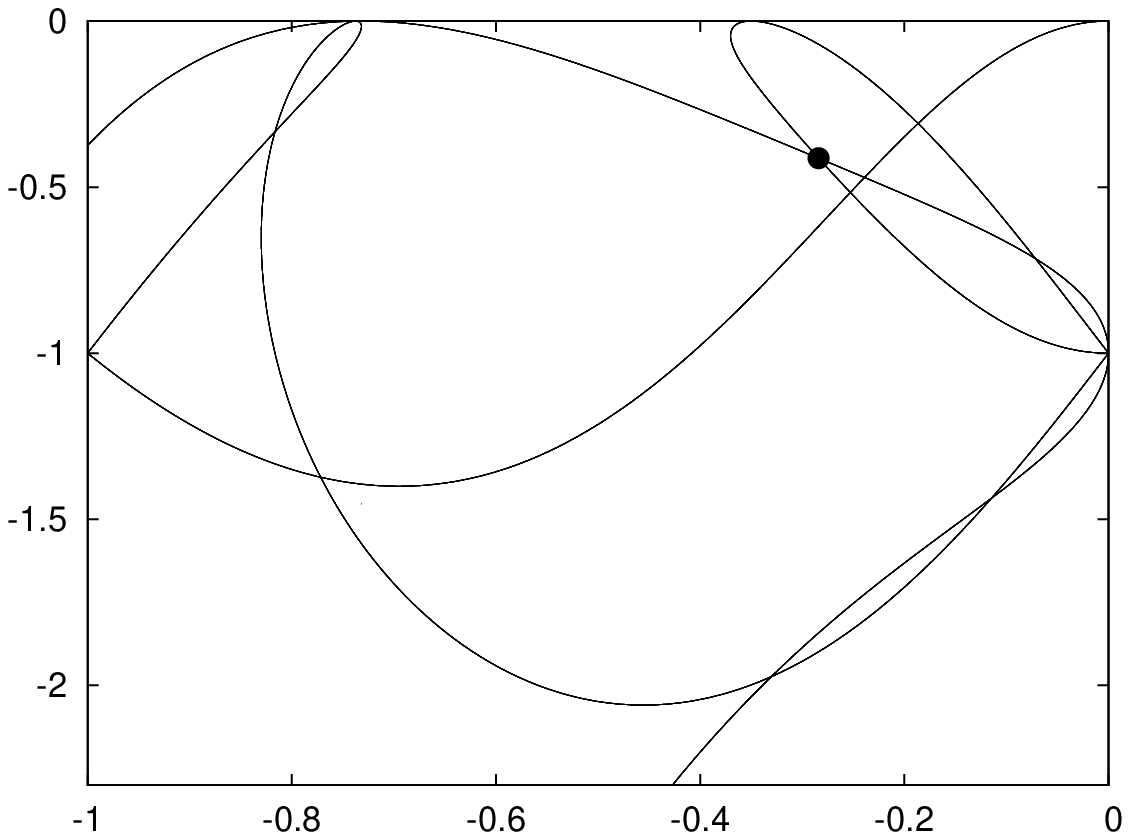,width=7.7cm} &
\hspace{-5mm}\epsfig{file=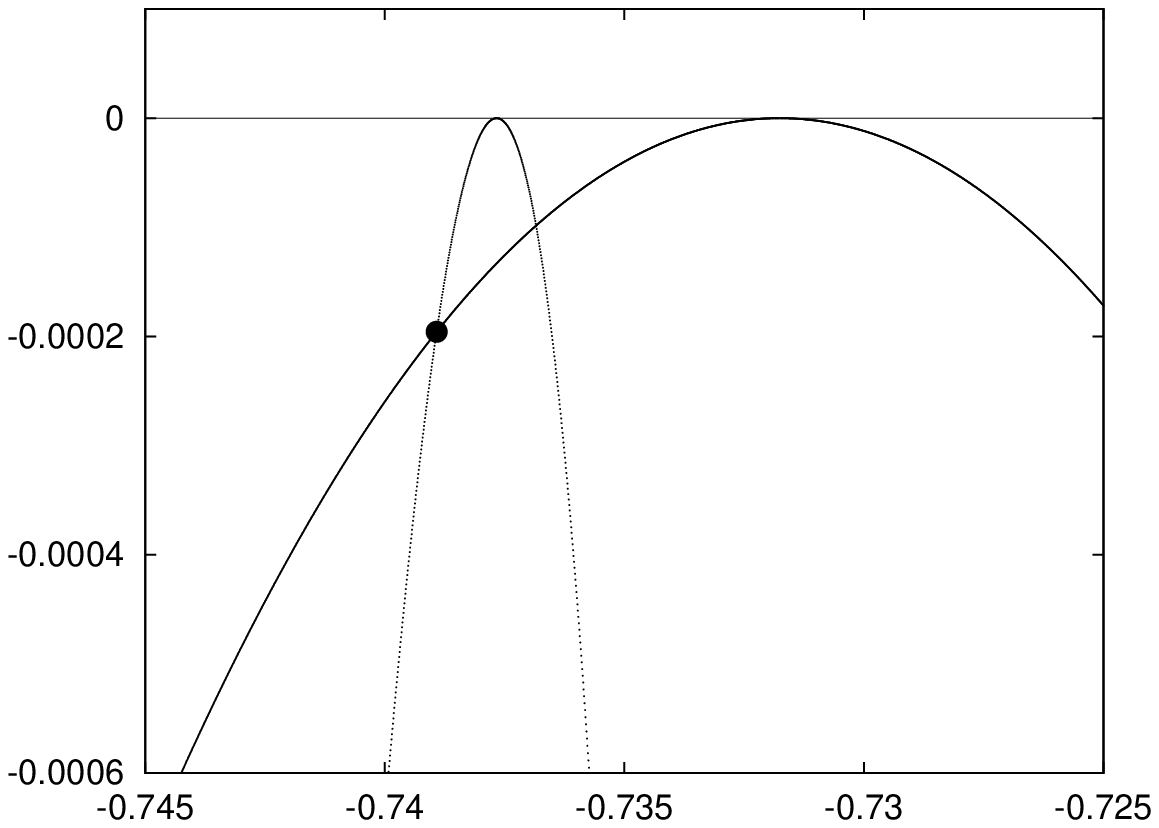,width=7.7cm}
\end{tabular}
\end{center}
\vspace{-3mm}
\caption{A plot of the negative values of $\mu^2$ as a function of $z<0$ for the
limit problem $\Gamma=0$. The large dots show the location of the double zeroes
associated with the Hamiltonian-Hopf bifurcations that can be seen as tangencies
of the red domains with $\Gamma=0$ shown in Figure \ref{fig:stabreleq} bottom,
left and right. The curves of zeroes associated with the restricted problem are
easily identified as having a tangency with $z\!=\!0$ at $\mu^2\!=\!-1$. The
branch reaching the lower boundary of the plot continues down up to $z=-1$.}
\label{fig:stabrest}
\end{figure}

Figure \ref{fig:stabrest} shows all the relevant zeroes, when real, simultaneously 
as function of $z$.
The exact solution $M_0$ (see Section \ref{sect:limit}) can be identified as the curve starting 
at $(-1,1)$ and ending at $(0,0)$ (see also formula (7) in \cite{masimo}).
Double zeroes involving $M_0$ should not be taken into consideration in the light of the
explanations given in that paper.
In Figure \ref{fig:stabrest}, we identified two
double zeroes that are the responsible for the Hamiltonian-Hopf bifurcations observed in Figure \ref{fig:stabreleq}.

To locate them, we consider the polynomial $Q(M)$ introduced in Section 
\ref{sect:limitm10}. It is convenient  to express the 
coefficients of $Q(M)$ in terms of $ Z=z^2 = 1-r^2$. We will further denote this polynomial by $Q(M,Z)$.
Also we write the equation for the additional zeroes (\ref{zeropm}) as $S(M) =0$,
where
$$ S(M) = \left( M^2 + 2 M+ \frac{z^2 G^3}{4} +1 \right)^2 - z^2 G^3 (M-1)^2. $$
Using  $ G=3 (1+3 z^2)/4 $, we can write $S(M)$ as a polynomial in $M$ with polynomial
coefficients in $Z$ to be denoted as $S(M,Z)$.
Then we compute the resultant $\cR(Z)$ of $Q(M,Z)$ and $S(M,Z)$ to eliminate $M$.  
The polynomial $\cR(Z)$, of degree 29, factorizes
as $\cR(Z)=\cR_1(Z)\cR_2(Z)$, with factors of degrees 14 and 15, respectively.
A part of the expressions is
\[ \cR_1(Z)=31381059609Z^{14}+135984591639Z^{13}+\ldots +843283683Z-2985984,\]
\vspace{-4mm}
\[ \cR_2(Z)=1162261467Z^{15} + 5036466357Z^{14} +\ldots +133996544 Z+16777216.\]
The polynomial $\cR_1(Z)$ has exactly 4 real zeroes with $Z\in(0,1)$, 
approximately located at the following values of
$z$:
\[-0.071519103755,\quad -0.114735617843,\quad -0.330240264422,\quad
-0.736842605000,\]
whereas
$\cR_2(Z)$ has exactly two zeroes in the same interval corresponding to the values of $z$
\[ z_4\approx -0.7389177458229170,\quad z_5\approx -0.2839588732787964. \]
This fact is in agreement with the
plots in Figure \ref{fig:stabrest}. 
To show that the two zeroes $z_4,z_5$ for $\Gamma=0$ give rise to a
bifurcation, it is enough to check that 
$\partial$Res$(\Gamma,z,D(z))/\partial \Gamma \neq0$ 
at the points $(0,z_j,D(z_j)),j=4,5.$ The computed values are
$\approx -0.626660386126$ at $z_4$ and $\approx 27.6667376231$ at $z_5$, far
away from zero.

In the case of the other four double zeroes, we obtain values of
$\partial$Res$(\Gamma,z,D(z))/\partial\Gamma$ equal to zero (with the expected
accuracy, see the beginning of the present section). Imposing the condition of
double zero for $M$, $\partial \hat{p}(M,\Gamma,z,D(z))/\partial M=0$, and
substituting it in $\hat{p}(M,\Gamma,z,D(z))$, we obtain, to low order, a double branch
of double zeroes in the $(\Gamma,z)$ variables. Using now Res$_3$, as we did
in Subsection \ref{sect:near00}, we obtain that the branch is simple. There is
no need to employ the Newton polygon; the Implicit Function Theorem is enough because the linear coefficients are nonzero. Hence, no bifurcation related to the zeroes of $\cR_1$ occurs.

We have thus proved the following result.

\begin{proposition} \label{prop:gammasmall}
In the passage from the restricted to the general problem for $z<0$ and a small mass
ratio $\Gamma$, excluding a neighbourhood of $(\Gamma,z)=(0,0),$
already studied in Proposition \ref{prop:near00}, changes in the stability
properties occur along lines of the $(\Gamma,z)$ plane. Three of these lines tend
to the values $-z_1,-z_2,-z_3$ when $\Gamma\to 0$. Additional changes occur
along two curves, with quadratic tangencies to the line $\Gamma=0$ at the two values
$z_4$ and $z_5$ where the characteristic polynomials of the curved $3$-body
problem and the restricted problem have zeroes in common. In all these cases a
Hamiltonian-Hopf bifurcation occurs.
\end{proposition}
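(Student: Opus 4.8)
The plan is to organise the argument around the factorisation of the restricted ($\Gamma=0$) characteristic polynomial and to treat the switching-on of $\Gamma$ as an analytic perturbation governed by Lemma \ref{thelemma}, working always away from the origin, whose neighbourhood is handled separately in Proposition \ref{prop:near00}. First I would record that at $\Gamma=0$ the polynomial splits into the curved $3$-body factor $Q(M,Z)$ of Section \ref{sect:limitm10} and the north-pole factor $S(M,Z)$ coming from the additional zeroes \eqref{zeropm}; that for $z<0$ all the relevant roots of both factors are real and negative (the $3$-body stable ranges of \cite{masimo} together with Proposition \ref{prop:rest}); and that the three $3$-body Hamiltonian-Hopf transitions — where two roots of $Q$ collide and leave the real axis as $z$ varies — fall under item (i) of Lemma \ref{thelemma}. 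By item (a) of that lemma each such transition persists as a transversal line $z=h(\Gamma)$ tending to $-z_1,-z_2,-z_3$ as $\Gamma\to0$, which accounts for the first three lines of the statement; since the north-pole roots stay negative and simple they produce no bifurcation on their own.

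The only genuinely new mechanism is a collision between a root of $Q$ and a root of $S$, which at $\Gamma=0$ is a double root of $\hat{p}$ at which the two branches merely cross (both stay real as $z$ moves), so it sits under item (ii) of Lemma \ref{thelemma}. To locate these collisions I would form $\cR(Z)$, the resultant of $Q(M,Z)$ and $S(M,Z)$ in $M$, written in $Z=z^2$, factor it as $\cR_1(Z)\cR_2(Z)$, and extract the real roots in $(0,1)$: four from $\cR_1$ and the two values $z_4,z_5$ from $\cR_2$. These six points are the only candidates for new transitions, in agreement with Figure \ref{fig:stabrest}.

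It then remains to decide, candidate by candidate, whether turning on $\Gamma$ pushes the colliding pair off the real axis. The diagnostic from Lemma \ref{thelemma} and Remark \ref{remafterlem} is transversality in the $\Gamma$ direction: if $\partial{\rm Res}(\Gamma,z,D(z))/\partial\Gamma\neq0$ at $(0,z_j,D(z_j))$ we are in case (b), which yields a curve $\Gamma=k(z)$ with quadratic tangency to $\Gamma=0$ along which the roots pass from real to complex, i.e. a Hamiltonian-Hopf bifurcation. For $z_4,z_5$ this derivative evaluates to $\approx-0.627$ and $\approx27.67$, safely nonzero, producing the two tangent bifurcation curves of the statement. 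For the four roots of $\cR_1$ the derivative vanishes, so we are in the degenerate item (c); here imposing $\partial\hat{p}/\partial M=0$ and substituting back gives, to low order, a double branch of double zeroes, while the resultant ${\rm Res}_3$ built from $\partial\hat{p}/\partial M$ and $\partial\hat{p}/\partial\Gamma$ shows the branch to be simple. By the exceptional subcase $h_1=h_2$ of Remark \ref{remafterlem}, a simple ${\rm Res}_3$-branch against a double ${\rm Res}$-branch signals an avoided crossing, so no bifurcation occurs at these four points. Assembling the three persistent $3$-body lines with the two new tangent curves at $z_4,z_5$ then reproduces exactly the stated list, all of Hamiltonian-Hopf type.

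The main obstacle I anticipate is precisely this last classification step. Because the dependence on $D(z)=G^{-5/2}$ prevents the resultants from being polynomials in a single variable, ${\rm Res}$, ${\rm Res}_2$ and ${\rm Res}_3$ are enormous, and the whole conclusion hinges on reliably distinguishing a simple branch of double zeroes (a genuine bifurcation, cases (a)/(b)) from a double branch (an avoided crossing, the $h_1=h_2$ case of Remark \ref{remafterlem}). This is where the multi-precision evaluation of $\partial{\rm Res}/\partial\Gamma$ and the cross-checking among the three different resultants become indispensable, since a misidentification of the branch multiplicity would wrongly add or delete a stability transition.
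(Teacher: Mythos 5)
Your proposal is correct and follows essentially the same route as the paper's proof: the same splitting of the $\Gamma=0$ polynomial into the $3$-body factor $Q(M,Z)$ and the restricted factor $S(M,Z)$, persistence of the three transitions $-z_1,-z_2,-z_3$ via Lemma \ref{thelemma}(a), the resultant $\cR(Z)=\cR_1(Z)\cR_2(Z)$ to locate the six candidate collisions, the check $\partial\mathrm{Res}/\partial\Gamma\neq 0$ (with the same values $\approx -0.627$ and $\approx 27.67$) to establish the two tangent Hamiltonian--Hopf curves at $z_4,z_5$, and the Res--versus--Res$_3$ branch-multiplicity argument of Remark \ref{remafterlem} to exclude bifurcations at the four zeroes of $\cR_1$. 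The only notable (and accurate) addition is that you make explicit the identification of the $Q$--$S$ crossings with case (ii)/(b) of Lemma \ref{thelemma}, which the paper leaves implicit.
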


\medskip

\subsection{\bf The main result}
We can summarize the conclusions obtained in the above propositions by saying
that they validate the numerical results near the relevant boundaries of the domain
$(\Gamma,z)$, i.e.\ near $\Gamma=0, \Gamma=\infty, z=0$. We have found all bifurcations produced by perturbation of the limit problems.
These properties also describe a general view on the problem of stability of tetrahedral orbits in
the curved 4-body problem in $\mathbb S^2$. The properties rigorously proved above are now summarized by the following result.

\begin{theorem}
We consider the tetrahedral solutions of the positively curved $4$-body problem on $\mathbb S^2$ with a fixed
body of mass $m_1$ located at the north pole and the other three bodies of equal mass $m$ located at the vertices
of an equilateral triangle orthogonal to the $z$-axis. Let be $\Gamma = \frac{m_1}{m}>0$. Then:
\begin{enumerate}
\item For $\Gamma \to \infty$ there are three functions, $z_4(1/\Gamma),
z_5(1/\Gamma)$, and $z_6(1/\Gamma)$, tending to $z_{4,0}, z_{5,0}$, and $z_{6,0}$,
respectively, at which Hamiltonian-Hopf bifurcations occur. The respective tetrahedral relative equilibria are
\begin{itemize}
\item spectrally stable of type ${\rm E}^6$ for $z \in (0,z_4(1/\Gamma)).$
\item unstable of type ${\rm E}^4{\rm CS}^1$ for $z\in(z_4(1/\Gamma),z_5(1/\Gamma))\cup(z_6(1/\Gamma),1)$, and of type ${\rm E}^2{\rm CS}^2$ for $z\in(z_5(1/\Gamma),z_6(1/\Gamma))$.
\end{itemize}
\item For any fixed value of the mass ratio $\Gamma$ in $(0,\infty)$, there is
a range of values of $z$ of the form $(0,\hat{z}(\Gamma)),\hat{z}(\Gamma)>0$,
such that in that range the orbit is spectrally stable. The value of 
$\hat{z}(\Gamma)$ tends to $z_{4,0}$ for $\Gamma\to\infty$ and $\hat{z}(\Gamma)
\to 0$ for $\Gamma\to 0$.
\item For any $z>0$ fixed when $\Gamma$ approaches 0 the orbit is unstable.
\item For any $z<0$ fixed and close to zero, let $\Gamma^*(z)$ be the value
corresponding to $\Omega^2 =0$ and $\gamma=\Gamma^*-\Gamma$. Then, when
$\gamma\to 0^+$, the corresponding tetrahedral relative equilibria are unstable.
\item Around the point $\Gamma=0,z=0$ there are six sectors, $\sigma_1, \sigma_2,
 \sigma_3, \sigma_4, \sigma_5, \sigma_6$, ordered counterclockwise, in which the
type of the orbits are: ${\rm E}^6,\,{\rm E}^5{\rm H}^1,\,{\rm E}^3{\rm H}^1
{\rm CS}^1$, no solutions, ${\rm E}^6$, and ${\rm E}^4{\rm CS}^1$, respectively.
The dominant terms in the boundaries of the sectors are of the form $\Gamma=0,\,
\Gamma=-c_1z^3,\,z=-\sqrt{3}\Gamma/8-c_2\Gamma^2,$ $z=-\sqrt{3}\Gamma/8-c_3
\Gamma^3,\,z=0, \,z=c_4\Gamma^{2/3}$, and $\Gamma=0$, the first four with $z<0$
and last two with $z>0$. All the coefficients  $c_i$ are positive.
\item For small $\Gamma$ and $z<0$, there exist five curves, $\psi_1, \psi_2,
 \psi_3, \psi_4$, and $\psi_5$, at which stability changes of Hamiltonian-Hopf
type occur. The first three are transversal to the line $\Gamma=0$, whereas the other two
are tangent. These curves reach $\Gamma=0$ for the following values of $z$:
$$z_1 \approx -0.829985,\ \ z_2\approx  -0.731860,\ \ z_3\approx  -0.370248,$$
$$z_4\approx -0.738918,\ \ z_5\approx -0.283959.$$
For small\ $\Gamma$, in particular, the tetrahedral relative equilibria are
spectrally stable for $z$ between: $\psi_1$ and the lower branch of $\psi_4$;
the upper branch of $\psi_4$ and $\psi_2$; $\psi_3$ and the lower branch of 
$\psi_5$; and between the upper branch of $\psi_5$ and the curve
bounding $\sigma_1$ in item {\rm (5)}, above.
\end{enumerate}
\end{theorem}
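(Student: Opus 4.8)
The plan is to obtain the theorem by assembling the propositions proved throughout this section, since each of its six items restates, in a unified terminology, a conclusion already established. First I would identify the source of each item. Item~$(1)$ is precisely Proposition~\ref{prop:epsmall}: the Newton-polygon analysis in the $(\eps,M)$ variables produced the three bifurcation values $z_{4,0},z_{5,0},z_{6,0}$, the super- and sub-critical character of the corresponding Hamiltonian-Hopf bifurcations, and the resulting types ${\rm E}^6$, ${\rm E}^4{\rm CS}^1$, and ${\rm E}^2{\rm CS}^2$ on the three $z$-intervals. Item~$(2)$ is Proposition~\ref{prop:zsmall}, which for each fixed $\Gamma\in(0,\infty)$ furnishes a range $(0,\hat z(\Gamma))$ of spectral stability; the two limiting statements follow by matching $\hat z(\Gamma)$ with the upper boundary of the ${\rm E}^6$ region. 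As $\Gamma\to\infty$ that boundary is $z_4(1/\Gamma)\to z_{4,0}$ from item~$(1)$, while as $\Gamma\to 0$ one has $\eps=1/\Gamma\to\infty$, so the estimate of Proposition~\ref{prop:zsmall} loses uniformity and $\hat z(\Gamma)\to 0$, in accordance with the corner analysis of Subsection~\ref{sect:near00}.

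Next I would dispatch items~$(3)$, $(4)$, and $(6)$. Item~$(3)$ follows from Proposition~\ref{prop:rest}: in the restricted problem the body at the pole is unstable for $z>0$, producing a hyperbolic direction, and by Lemma~\ref{thelemma}$({\rm a})$ this instability persists once the small mass $m_1>0$ is switched on. Item~$(4)$ is read directly from Proposition~\ref{prop:nearcurve}: its part~$({\rm ii})$ exhibits, as $\gamma\to 0^+$, a real \emph{positive} root, so the orbit carries a hyperbolic direction and is unstable, in agreement with the remark that such points lie in the pale blue (${\rm E}^3{\rm H}^1{\rm CS}^1$) domain. Item~$(6)$ is Proposition~\ref{prop:gammasmall}: the three transversal curves are the persistences, via Lemma~\ref{thelemma}$({\rm a})$, of the curved $3$-body bifurcations reaching $-z_1,-z_2,-z_3$ as $\Gamma\to 0$, whereas the two tangent curves arise from the common zeroes $z_4,z_5$ of the resultant $\cR_2(Z)$, their non-degeneracy being guaranteed by the computed nonzero values of $\partial{\rm Res}/\partial\Gamma$ at $(0,z_4)$ and $(0,z_5)$.

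The genuinely synthetic work --- and the step I expect to be the main obstacle --- is item~$(5)$, the six-sector picture at $(\Gamma,z)=(0,0)$. Here I must fit together the three bifurcation lines of Proposition~\ref{prop:near00}, the two half-axes $\Gamma=0$, the boundary $z=0$, and the curve $\Omega^2=0$, and then verify two things: that the angular ordering of the dominant terms $\Gamma=-c_1z^3$, $z=-\sqrt3\,\Gamma/8-c_2\Gamma^2$, $z=-\sqrt3\,\Gamma/8-c_3\Gamma^3$, and $z=c_4\Gamma^{2/3}$ around the origin is indeed the counterclockwise one, and that the types in adjacent sectors differ by exactly the bifurcation recorded on their shared boundary. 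The ordering rests on comparing tangency exponents: the cubic-tangent line $\Gamma=-c_1z^3$ hugs the lower axis, the two branches of common leading slope $-\sqrt3/8$ separate at higher order (the $\Gamma^2$-corrected bifurcation line lying below the $\Gamma^3$-corrected curve $\Omega^2=0$, which bounds the empty sector $\sigma_4$), and $z=c_4\Gamma^{2/3}$ is tangent to the upper axis; the positivity of the $c_i$ established in Proposition~\ref{prop:near00} fixes the signs. Root-count consistency is then settled by evaluating the type at one representative point of each sector --- ${\rm E}^6$ near the lower axis by Proposition~\ref{prop:rest}, ${\rm E}^6$ just above $z=0$ by Proposition~\ref{prop:zsmall}, and so on --- after which the labels propagate unambiguously across the six boundaries, and collecting all the pieces yields the theorem.
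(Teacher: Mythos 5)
Your proposal is correct and follows essentially the same route as the paper: the theorem there carries no independent proof but is explicitly presented as the summary of Propositions \ref{prop:rest}, \ref{prop:epsmall}, \ref{prop:zsmall}, \ref{prop:nearcurve}, \ref{prop:near00}, and \ref{prop:gammasmall}, together with the persistence arguments via Lemma \ref{thelemma}, exactly as you assemble them item by item. Your treatment of item (5) --- ordering the boundary curves by their tangency exponents and fixing the sector types from representative points, propagating the labels across each bifurcation line --- is precisely the synthesis the paper leaves implicit.
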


\section{Conclusions and outlook}

In this last section we will draw some final conclusions about the stability of tetrahedral relative equilibria and propose three problems that, in order to be solved, would require certain refinements of the methods we applied here.

\begin{figure}[ht]
\begin{center}
\epsfig{file=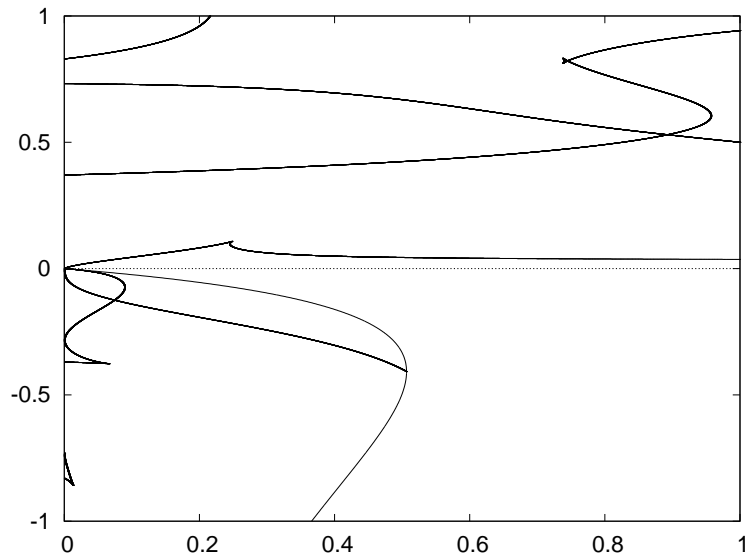,width=11cm}
\end{center}
\vspace{-2mm}
\caption{Bifurcation diagram for the relative equilibrium solutions. Variables
displayed: $(\Gamma/(1+\Gamma),z)$. With the only exception of the boundary of
the admissible domain $\Omega^2=0$ for $z<0$ and the line going from $(0,0)$ to
the tip of the $\Omega^2=0$ line, which corresponds to E-H bifurcation, all the
other lines correspond to Hamiltonian-Hopf bifurcations.}
\label{fig:bifur}
\end{figure}

We can now summarize the stability results we obtained in this paper about the relative equilibria of the tetrahedral 4-body problem in $\S^2$ by displaying the full
bifurcation diagram. To complete the above analysis of the limit cases, we
present the diagram computed from the resolvent Res$(\Gamma,
z,D(z))$ and from the conditions $\hat{p}(0,\Gamma,z,D(z))=0$. In both cases,
given a value of $z$, we obtain a polynomial equation for $\Gamma$. We computed the zeroes numerically and discarded the ones that do not give rise to any bifurcation. We checked the facts that occur here by looking at the derivatives with
respect to $M$ and $z$ at the solutions found. Figure \ref{fig:bifur} depicts
the results. As horizontal variable we used $\Gamma/(1+\Gamma)$ in order to display the full range of $\Gamma\in[0,\infty]$.

\bigskip

A possible continuation of the present work is the study of the linear stability for pyramidal solutions, i.e.\ orbits of the positively curved $n$-body problem, for $n>4$, with a fixed body of mass $m_1$ located at the north pole and the other $n-1$ bodies of equal mass $m$ lying at the vertices of a rotating regular polygon, orthogonal to the $z$-axis. But the analytic methods pursued here have limits. It seems that the symbolic computations and the related analysis would become insurmountable for $n$ larger than 7 or 8. Even a purely numerical study must be done very carefully. Another interesting problem is to analyze the linear stability of tetrahedral orbits in $\S^3$.
Finally, the stability of tetrahedral orbits in $\S^2$ when the $z$-coordinate of the three equal masses is not constant, but varies periodically in time, would also be a problem worth approaching.

\section*{Acknowledgements}
This research has been supported in part by Grants  
MTM2006-05849/Consolider and  MTM2010-16425 from Spain 
(Regina Mart\'{\i}nez and Carles Sim\'o), Conacyt Grant 128790 from M\'exico 
(Ernesto P\'erez-Chavela), and NSERC Discovery Grant 122045 from Canada (Florin Diacu).  The authors also acknowledge the computing facilities of the Dynamical Systems Group at the Universitat de Barcelona, which have been largely used in the numerical experiments presented in this paper.

\bigskip


\end{document}